\numberwithin{equation}{section}
\theoremstyle{plain}
\newtheorem{theorem}{Theorem}[section]
\newtheorem{lemma}[theorem]{Lemma}
\newtheorem{proposition}[theorem]{Proposition}
\theoremstyle{definition}
\newtheorem{definition}[theorem]{Definition}
\newtheorem{remark}[theorem]{Remark}
\newcommand{\E}{\mathbb{E}}
\renewcommand{\P}{\mathbb{P}}
\newcommand{\ud}{\ensuremath{\mathrm{d} }}
\newcommand{\Norm}[1]{\left\|  #1 \right\|}
\newcommand{\calB}{\mathcal{B}}
\newcommand{\calF}{\mathcal{F}}
\newcommand{\calN}{\mathcal{N}}
\newcommand{\R}{\mathbb{R}}
\DeclareMathOperator{\Lip}{\mathit{L}}
\title{A stochastic heat equation with non-locally \\ Lipschitz coefficients}
\author{Le Chen, Jingyu Huang, and Wenxuan Tao}
\begin{document}
\maketitle
\begin{abstract}
  We consider the \emph{stochastic heat equation} (SHE) on the torus
  $\mathbb{T}=[0,1]$, driven by space-time white noise $\dot W$, with an initial
  condition $u_0$ that is nonnegative and not identically zero:
  \begin{equation*}
    \frac{\partial u}{\partial t} = \tfrac{1}{2}\frac{\partial^2 u}{\partial x^2} + b(u) + \sigma(u)\dot{W}\,.
  \end{equation*}
  The drift $b$ and diffusion coefficient $\sigma$ are Lipschitz continuous away
  from zero, although their Lipschitz constants may blow up as the argument
  approaches zero. We establish the existence of a unique global mild solution
  that remains strictly positive. Examples include \(b(u)=u|\log u|^{A_1}\) and
  \(\sigma(u)=u|\log u|^{A_2}\) with \(A_1\in(0,1)\) and \(A_2\in(0,1/4)\).

  \bigskip

    \noindent MSC 2020: Primary 60H15; secondary 35R60.\\
    
  \noindent\textit{Keywords.} non-locally Lipschitz coefficients, stochastic
  heat equation, space-time white noise, torus.\\

  \noindent{\it \noindent }

\end{abstract}

\setlength{\parindent}{1.5em}


\section{Introduction}

In this paper we study the stochastic heat equation (SHE) on the one-dimensional
torus $\mathbb{T}=[0,1]$ (i.e., the unit interval with periodic boundary
conditions),
\begin{equation}\label{E:SHE}
  \frac{\partial u}{\partial t} = \tfrac{1}{2}\frac{\partial^2 u}{\partial x^2} + b(u) + \sigma(u)\dot{W}\,,
\end{equation}
subject to a bounded initial condition $u_0(x)$, driven by space-time white
noise $\dot W$. We assume that both $b$ and $\sigma$ are locally Lipschitz
continuous away from zero, although their Lipschitz constants may blow up at
both zero and infinity. The solution to~\eqref{E:SHE} is understood in the mild
form:
\begin{equation}
\begin{aligned}\label{E:mild}
  u(t,x) = \int_0^1 G_t(x-y) u_0(y) \ud y
  + & \int_0^t \int_0^1 G_{t-s}(x-y)b(u(s,y))\ud y \ud s \\
  + & \int_0^t \int_0^1 G_{t-s}(x-y) \sigma(u(s,y)) W(\ud s,\ud y )\,,
\end{aligned}
\end{equation}
where the stochastic integral is the It\^o--Walsh
integral~\cite{walsh:86:introduction} and $G$ is the heat kernel on the torus:
\begin{equation}\label{E:HeatKernel}
  G_t(x) = \sum_{k \in \mathbb{Z}} \frac{1}{\sqrt{2\pi t}} e^{-\frac{(x-k)^2}{2t}}\,.
\end{equation}

It is well known that superlinearity may cause the solution to parabolic partial
differential equations to blow up (see, for example,
\cite{galaktionov.vazquez:02:problem}). When it comes to stochastic partial
differential equations (SPDEs); there has been extensive work on blowup
phenomena see, e.g., \cite{mueller:97:long}, \cite{mueller:98:long-time},
\cite{mueller:00:critical}, \cite{mueller.sowers:93:blowup}.
Salins~\cite{salins:25:solutions} focuses on the case without a drift term.
Mytnik \& Perkins~\cite{mytnik.perkins:11:pathwise} study the stochastic heat
equation with globally Lipschitz drift and multiplicative noise, \ but with a
non-Lipschitz diffusion coefficient.

In the case that there is a non-Lipschitz drift term, Fern\'andez Bonder and
Groisman~\cite{fernandez-bonder.groisman:09:time-space} demonstrated the
importance of the so-called \emph{Osgood condition} in SPDEs. The Osgood
condition~\cite{osgood:98:beweis}, originating in the theory of ordinary
differential equations (ODEs), generalizes the Lipschitz condition to ensure the
uniqueness. More precisely, a function $h:\R\to \R$ satisfies the \emph{infinite
Osgood condition} if
\begin{align}\label{E:Osgood}
    \int_c^\infty\frac{1}{h(z)}\ud z=\infty\,.
\end{align}
The main result in Fern\'andez Bonder and
Groisman~\cite{fernandez-bonder.groisman:09:time-space} is that the solution
to~\eqref{E:SHE} blows up in finite time almost surely if $b$ does not satisfy
the infinite Osgood condition~\eqref{E:Osgood} and $\sigma$ is constant. Foondun
and Nualart~\cite{foondun.nualart:21:osgood} later proved that if $\sigma$ is
constant, the Osgood condition is both necessary and sufficient for the
existence of a global solution. This result holds regardless of whether the
domain is bounded or unbounded, and remains valid under colored noise (with
Riesz kernel-type spatial correlation) as well as in settings involving the
fractional Laplacian.

Fern\'andez Bonder \& Groisman~\cite{fernandez-bonder.groisman:09:time-space}
also motivated research into the case where $\sigma$ is not constant. Using a
stopping-time argument, Dalang, Khoshnevisan and
Zhang~\cite{dalang.khoshnevisan.ea:19:global} established that~\eqref{E:SHE}
admits a unique global mild solution provided that $b$ and $\sigma$ are locally
Lipschitz and satisfy the following growth conditions at infinity:
\begin{align*}
    \left|b(z)\right|=O\left(|z|\log |z|\right)\quad\text{and}\quad\left|\sigma(z)\right|=o\left(|z|\left(\log |z|\right)^{1/4}\right)\,,
    \quad \text{as $|z|\to \infty$.}
\end{align*}
Following the work of~\cite{dalang.khoshnevisan.ea:19:global}, there has
recently been significant research on SPDEs with non-Lipschitz drift terms; see,
for example, \cite{salins:21:existence}, \cite{salins:22:global},
\cite{salins:22:global*1}, as well as~\cite{millet.sanz-sole:21:global} for
results concerning stochastic wave equations. The result
in~\cite{dalang.khoshnevisan.ea:19:global} was subsequently extended by Chen and
Huang~\cite{chen.huang:23:superlinear} to the setting of $\R^d$ and to noise
that is white in time and colored in space. Moreover, the condition on $b$ has
recently been further relaxed to the infinite Osgood condition~\eqref{E:Osgood}
in~\cite{chen.foondun.ea:23:global}. We would like to mention that
Kotelenez~\cite[Theorem 3.4]{kotelenez:92:comparison} established the existence
of a global mild solution without the Lipschitz condition. Instead, one-sided
Lipschitz condition (see A.3.iii therein) of the drift term $b$ was assumed.

\bigskip

All the aforementioned literature on global existence and uniqueness of the
solution considers the effects of superlinearity of $b$ or $\sigma$ only at
infinity; that is, the Lipschitz coefficients are allowed to blow up at
infinity as the argument tends to infinity, while both functions are assumed to
be locally Lipschitz, meaning they exhibit regular behavior near zero. In this
paper, we are going to consider the \emph{opposite scenario}, namely, when the
Lipschitz coefficients of the drift term $b$ and the diffusion coefficient
$\sigma$ may blow up at $0$. A typical example is $b(z) = z \log z$ for $z$
close to $0$. Such a logarithmic correction naturally appears in relativistic
and wave mechanical contexts, for instance, in~\cite[equations
(3.9)]{bialynicki-birula.mycielski:76:nonlinear} and~\cite[equation
(1)]{rosen.gerald:69:Dilatation}. To the best of our knowledge, Shang and
Zhang~\cite{shang.zhang:22:stochastic} were the first to establish the
existence of a global solution to the stochastic heat equation under the
assumption that $b(z) = z|\log z|$ for all $z \in \mathbb{R}$, which lacks
local Lipschitz continuity at $z=0$. However, they still required $\sigma$ to
be locally Lipschitz around zero and assumed that the driving noise is a
standard Brownian motion. Pan, Shang et al.~\cite{pan.shang.ea:25:large} later
established a large deviation principle under the same setup. In a recent work,
Kavin \& Majee~\cite{Kavin.Majee:25:levy} studied SPDEs driven by Lévy noise,
incorporating a logarithmic correction in the drift term.

In the case of SPDE driven by space-time white noise, recently, Han, Kim and
Yi~\cite{han.kim.ea:24:on} established the well-posedness, strict positivity and
compact support property of the global mild solution to the stochastic heat
equation driven by space-time white noise on $\R$ under the assumption $b \equiv
0$ and
\begin{align}\label{E:HanKimYiAssumption}
    0< \sigma(z)\le z|\log z|^{\alpha}\quad \text{for some $\alpha \in (0,1/4)$, for $z$ near $0$}\,.
\end{align}

Their approach relies on a classical technique involving the subtraction of two
solutions followed by an application of Gronwall's lemma. A key ingredient in
their analysis is the additional assumption that $\sigma(z)/z$ is monotone
decreasing, which plays a crucial role in their proof.

In this paper, we consider SHE~\eqref{E:SHE} which includes a nonvanishing drift
term $b$, and we assume that, as $z$ approaches {$0^+$},
\begin{equation}\label{E:OurAssumption}
  \begin{dcases}
    |b(z)|      = O\left(z|\log z|^{A_1}\right) & \text{for some $A_1\in (0,1)$,} \\[0.5em]
    |\sigma(z)| = O\left(z|\log z|^{A_2}\right) & \text{for some $A_2\in (0,1/4)$.}
  \end{dcases}
\end{equation}
Our assumption on $\sigma$ is similar to condition~\eqref{E:HanKimYiAssumption}
in~\cite{han.kim.ea:24:on}, but we allow $\sigma$ to be in a wider class of
functions. Specifically, we do not require $\sigma(z)/z$ to be monotone
decreasing, nor do we impose any monotonicity or regularity conditions beyond
the stated growth bound. Regarding the assumption on $b(z)$, if the drift term
$b(z)$ stays positive as $z\to 0^+$, it actually helps the solution stay
positive. However, providing a rigorous justification for this statement under
minimal assumptions on the positive drift $b$ requires additional analysis,
which lies beyond the scope of the present paper. In this paper, we emphasize
that the non-positive drift case, where $b(z)$ may drive the solution towards
zero, is nontrivial. Examples of such non-positive drift include
\begin{align}\label{E:Examples-b}
  b(z) = - z \left(\log(1 /z)\right)^{A_1} \quad \text{and} \quad 
  b(z) = z \left(\log(1 /z)\right)^{A_1} \sin\left(1/z\right).
\end{align}

We establish the existence, uniqueness, and strict positivity of the global mild
solution by introducing a \emph{new methodology} based on stopping time
arguments, which can be summarized as follows. Given a positive initial
condition $u_0$, we modify $b(z)$ (and similarly $\sigma(z)$) for $z \in
(0,\epsilon)$ to obtain $b_{\epsilon}(z)$ (and $\sigma_{\epsilon}(z)$), so that
these functions become globally Lipschitz, thereby ensuring a global unique
solution $u_{\epsilon}$. Next, we construct a sequence of stopping times
$\tau_{\epsilon}$ such that, prior to $\tau_{\epsilon}$, the solution
$u_{\epsilon}$ remains greater than $\epsilon$. Consequently, before
$\tau_{\epsilon}$, $b_{\epsilon}(u_{\epsilon}) = b(u_{\epsilon})$ (and similarly
for $\sigma_{\epsilon}$), and we define the solution $u$ to coincide with
$u_{\epsilon}$ up to the stopping time $\tau_{\epsilon}$. Finally, we show that
the stopping times $\tau_{\epsilon}$ converge to any fixed time horizon $T$
almost surely as $\epsilon \to 0$, thereby obtaining the solution at any time;
see Theorem~\ref{T:SubCRT}. This procedure requires a careful analysis of the
probability that $u_{\epsilon}(t,x) < \epsilon$, which forms the main part of
Section~\ref{S:MainResults}. With a minor additional assumption
(see~\eqref{E:Liminf} below), this result can be partially extended to the case
$A_1 = 1$; see Theorem~\ref{T:Critical}. From this analysis, we also observe
that the blow-up of Lipschitz coefficients does not affect the moment bounds of
the solution. Consequently, we may apply the same method as
in~\cite{dalang.khoshnevisan.ea:19:global} to further accommodate the
superlinear growth of $b(z)$ and $\sigma(z)$ as $z\to \infty$; see
Theorem~\ref{T:Superlinear}.

The remainder of this paper is organized as follows. In
Section~\ref{S:Preliminary}, after introducing some preliminary concepts about
the noise structure and the definition of a mild solution, we prove some moment
estimates for the solution. Section~\ref{S:MainResults} consists of our main
result---Theorem~\ref{T:SubCRT}---along with its proof. In particular, this
section addresses the case $b(z) = O(z|\log z|^{A_1})$ for all $A_1 \in (0,1)$.
Subsequently, in Section~\ref{S:Extensions}, we extend our main result in two
directions: (1) we consider the case $A_1 = 1$ in Theorem~\ref{T:Critical} as a
limiting case as $A_1 \uparrow 1$, and (2) we extend our results to allow both
$b$ and $\sigma$ to exhibit superlinear growth at infinity in
Theorem~\ref{T:Superlinear}, as in~\cite{dalang.khoshnevisan.ea:19:global}.
Finally, Appendix~\ref{S:Appendix} collects several results that are used
extensively throughout the paper.

\paragraph{Notation} Throughout the paper, we will use $\Norm{\cdot}_p$ to
denote the $L^p(\Omega)$ norm and $\Norm{\cdot}_\infty$ the
$L^\infty(\mathbb{T})$ norm. We will use $C$ to denote a generic constant that
may change its value at each appearance. For $x, y \in \mathbb{T}$, $|x-y|$ is
understood as the distance on the torus. Let $L_b$ denote the growth constant of
$b$, which is defined as
\begin{align}\label{E:Lb}
  L_b \coloneqq \sup_{z>0} \frac{|b(z) - b(0)|}{z}\,.
\end{align}
Similarly, we use $L_\sigma$ to denote the growth constant of $\sigma$.

\paragraph{Acknowledgements} L.C. and J.H. thank Mickey Salins for valuable
discussions regarding conditions on the drift term $b(z)$. L.C. was partially supported by the NSF grant DMS-Probability (2023-2026, Award Number 2246850) and the Simons Foundation Travel Grant for Mathematicians (2022-2027, Award Number MPS-TSM-00959981).

\section{Preliminary estimates} \label{S:Preliminary}

To begin with, let $W \coloneqq \left\{ W_t(A), \:A \in
\calB\left(\mathbb{T}\right), t \ge 0 \right\}$ be a space-time white noise
defined on a complete probability space $(\Omega,\calF,\P)$, where
$\calB\left(\mathbb{T}\right)$ is the collection of Borel measurable sets. Let
\begin{align*}
  \calF_t \coloneqq \sigma\left(W_s(A),\:0\le s\le
  t,\:A\in\calB\left(\mathbb{T}\right)\right)\vee
  \calN,\quad t\ge 0,
\end{align*}
be the natural filtration of $W$ combined with the $\sigma$-field $\calN$
generated by all $\P$-null sets in $\calF$. Throughout the paper, we fix the
filtered probability space $\left\{\Omega,\calF,\{\calF_t,\:t\ge0\},\P\right\}$.
Under this setup, $W$ becomes a worthy martingale measure in the sense of
Walsh~\cite{walsh:86:introduction}, and $\iint_{[0,t]\times \mathbb{T}}X(s,y)
W(\ud s,\ud y)$ is well-defined in this reference for a suitable class of random
fields $\left\{X(s,y),\; (s,y)\in\R_+\times \mathbb{T}\right\}$. A rigorous
definition of the mild solution to~\eqref{E:SHE} is given below.

\begin{definition}\label{D:Solution}
  A process $u=\left(u(t,x),\:(t,x) \in (0,\infty) \times \mathbb{T}\right)$ is
  called a \emph{random field solution} to~\eqref{E:mild} if
  \begin{enumerate}[(1)]

   \item $u$ is adapted, i.e., for all $(t,x) \in (0,\infty) \times \mathbb{T}$,
     $u(t,x)$ is $\calF_t$-measurable;

  \item $u$ is jointly measurable with respect to $\calB\left( (0,\infty) \times
    \mathbb{T}\right) \times \calF$;

  \item for all $(t,x) \in (0,\infty) \times \mathbb{T}$,
    \begin{align*}
      \int_0^t\int_{\mathbb{T}} G_{t-s}^2(x-y) \E\left( \left|\sigma\left(u(s,y)\right)\right|^2 \right) \ud s \ud y < +\infty;
    \end{align*}

  \item For all $(t,x) \in (0,\infty) \times
    \mathbb{T}$, $u$ satisfies \eqref{E:mild} a.s.

  \end{enumerate}
\end{definition}


The drift term $b$ and the diffusion coefficient $\sigma$ appearing in both
Theorems~\ref{T:SubCRT} and~\ref{T:Critical} below satisfy the linear growth
condition. This condition is formalized in the following elementary lemma.
\begin{lemma}\label{L:LinearGrowth}
  Let $f: [0,\infty) \to \R$ be a continuous function such that, for some
  $\delta > 0$, $f$ is a Lipschitz function on $[\delta, \infty)$. Denote the
  corresponding Lipschitz constant by $L_{{f,\delta}}$. Then
  \begin{align}\label{E:LinearGrowth}
    | f(x) | \le L_{{f,\delta}}\: x + C_{{f,\delta}}  \quad \text{for all $x \ge 0$ with $C_{{f,\delta}} \coloneqq \sup_{y \in [0, \delta]} | f(y) | $}.
  \end{align}
\end{lemma}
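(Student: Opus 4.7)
The plan is to split the argument into two cases depending on whether $x$ lies in the interval $[0,\delta]$ where $f$ is merely continuous, or in $[\delta,\infty)$ where $f$ is Lipschitz.

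First I would handle $x \in [0,\delta]$. Since $f$ is continuous on the compact interval $[0,\delta]$, the quantity $C_{f,\delta} = \sup_{y \in [0,\delta]} |f(y)|$ is finite and by definition $|f(x)| \le C_{f,\delta}$. Adding the nonnegative term $L_{f,\delta}\, x$ on the right only makes the bound weaker, so~\eqref{E:LinearGrowth} holds trivially on $[0,\delta]$.

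Next I would handle $x > \delta$. Here I would invoke the Lipschitz assumption on $[\delta,\infty)$ to write
\begin{equation*}
  |f(x)| \le |f(\delta)| + |f(x) - f(\delta)| \le |f(\delta)| + L_{f,\delta}(x - \delta) \le C_{f,\delta} + L_{f,\delta}\, x,
\end{equation*}
since $|f(\delta)| \le C_{f,\delta}$ and $x - \delta < x$. Combining the two cases yields the claim.

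There is no real obstacle here; the lemma is essentially a bookkeeping statement that packages the behavior of $f$ on $[0,\delta]$ into the constant $C_{f,\delta}$ and the behavior on $[\delta,\infty)$ into the slope $L_{f,\delta}$. The only thing worth noting is that continuity on the closed interval $[0,\delta]$ (rather than merely on $(0,\delta]$) is what guarantees $C_{f,\delta} < \infty$; this will be the relevant version for applications where $f = b$ or $f = \sigma$, since those coefficients are continuous on $[0,\infty)$ by the standing hypotheses of the paper.
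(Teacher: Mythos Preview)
Your proof is correct and is exactly the natural two-case argument one would give. The paper itself states the lemma without proof, calling it an ``elementary lemma,'' so there is nothing further to compare.
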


Under the Lipschitz assumptions on $b$ and $\sigma$, together with the
boundedness assumption on the initial condition, standard fixed point arguments
can be used to establish the existence and uniqueness of the solution to
SHE~\eqref{E:SHE}. The following proposition provides a moment bound, with
particular attention to the precise dependence on the Lipschitz constants and
the parameter $p$.

\begin{proposition}\label{P:MomBdU}
  Consider SHE~\eqref{E:SHE} with bounded initial condition $u_0$.

  \noindent(1) Assume that both $b$ and $\sigma$ are globally Lipschitz with
  $b(0) = \sigma(0) = 0$. Let $u$ be the unique solution. Then for all $T > 0$
  and $p \ge 2$, $u$ satisfies the following moment bound:
  \begin{equation}\label{E:MomBdU-1}
    \sup_{t\in [0,T]} \sup_{x\in \mathbb{T}} \E \left(|u(t,x)|^p\right)
    \le 2^p\|u_0\|_{\infty}^p \exp \left( \left(4 L_b p + 2^{16} \pi^2 p^3 L_{\sigma}^4 \right) T \right)\,.
  \end{equation}

  \noindent(2) Assume that for some $\delta > 0$, both $b$ and $\sigma$ satisfy
  conditions in Lemma~\ref{L:LinearGrowth} with the same $\delta$. Then any
  solution $u$ satisfies the following a priori moment estimate similar
  to~\eqref{E:MomBdU-1}:
  \begin{equation}\label{E:MomBdU-2}
    \sup_{t\in [0,T]} \sup_{x\in \mathbb{T}} \E \left(|u(t,x)|^p\right)
    \le 2^p\left(\|u_0\|_{\infty} + \frac{C_{{b,\delta}}}{4 L_{b, \delta}} + \frac{C_{{\sigma,\delta}}}{4L_{\sigma, \delta}}\right)^p
    \exp \left( \left(4 L_{b, \delta}\, p + 2^{16} \pi^2 p^3 L_{\sigma, \delta}^4 \right) T \right)\,,
  \end{equation}
  for all $T > 0$ and $p \ge 2$, where the constants $C_{b, \delta}$ and $L_{b,
  \delta}$ (respectively, $C_{\sigma, \delta}$ and $L_{\sigma, \delta}$) are
  defined as in Lemma~\ref{L:LinearGrowth}, with $f$ replaced by $b$
  (respectively, $\sigma$).
\end{proposition}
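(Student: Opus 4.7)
My plan is to take $L^p(\Omega)$ norms on both sides of the mild formulation~\eqref{E:mild} and reduce the bound to a scalar Volterra inequality for
\begin{align*}
  N_p(t) \coloneqq \sup_{x\in\mathbb{T}} \|u(t,x)\|_p\,.
\end{align*}
For Part~(1), since $b(0)=\sigma(0)=0$ the Lipschitz hypothesis yields $|b(z)|\le L_b z$ and $|\sigma(z)|\le L_\sigma z$ for $z\ge 0$. The deterministic convolution is bounded pointwise by $\|u_0\|_\infty$ because $\int_0^1 G_t(x-y)\,\ud y=1$; Minkowski's inequality handles the drift, producing a contribution $\le L_b\int_0^t N_p(s)\,\ud s$ after again using $\int G_{t-s}=1$; and for the stochastic integral, Burkholder's inequality in the form $\|M\|_p^2\le c\,p\,\|\langle M\rangle\|_{p/2}$, followed by Minkowski applied to the $(p/2)$-norm of $\langle M\rangle$, gives a contribution bounded by $c\,p\,L_\sigma^2\int_0^t\|G_{t-s}\|_{L^2(\mathbb{T})}^2\,N_p(s)^2\,\ud s$. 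The standard heat-kernel estimate $\|G_t\|_{L^2(\mathbb{T})}^2\le C\,t^{-1/2}$ (of the type collected in Appendix~\ref{S:Appendix}) then produces a weakly singular kernel.

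Squaring and using $(a+b+c)^2\le 4(a^2+b^2+c^2)$, I would arrive at
\begin{align*}
  N_p(t)^2 \le 4\|u_0\|_\infty^2 + 4 L_b^2\,t\!\int_0^t\! N_p(s)^2\,\ud s + C\,p\,L_\sigma^2\!\int_0^t\!(t-s)^{-1/2} N_p(s)^2\,\ud s\,.
\end{align*}
A fractional Gronwall lemma (of the type available in the appendix) applied to this weakly singular Volterra inequality yields a bound of the form $N_p(t)^2\le 4\|u_0\|_\infty^2\exp\bigl((c_1 L_b+c_2 p^2 L_\sigma^4)\,t\bigr)$. Raising to the $p/2$ power then produces~\eqref{E:MomBdU-1}; the explicit exponents $4L_b p$ and $2^{16}\pi^2 p^3 L_\sigma^4$ arise from the double iteration of the $(t-s)^{-1/2}$ kernel, which introduces the classical factor $\int_r^t(t-s)^{-1/2}(s-r)^{-1/2}\,\ud s=\pi$, combined with the BDG constant and the extra power of $p$ produced when passing from $N_p^2$ to $N_p^p$.

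For Part~(2), I would reduce everything to Part~(1). Lemma~\ref{L:LinearGrowth} gives $|b(z)|\le L_{b,\delta}\bigl(z+C_{b,\delta}/L_{b,\delta}\bigr)$ and the analogous bound for $\sigma$, so repeating the above computation produces a Volterra inequality in which every $N_p(s)$ on the right-hand side is replaced by $N_p(s)+\kappa_b$ or $N_p(s)+\kappa_\sigma$ with $\kappa_b=C_{b,\delta}/L_{b,\delta}$ and $\kappa_\sigma=C_{\sigma,\delta}/L_{\sigma,\delta}$. Introducing the shifted quantity
\begin{align*}
  \widetilde N_p(t) \coloneqq N_p(t) + \frac{C_{b,\delta}}{4 L_{b,\delta}} + \frac{C_{\sigma,\delta}}{4 L_{\sigma,\delta}}
\end{align*}
absorbs the two constant contributions exactly---the factor $1/4$ is dictated by the $(a+b+c)^2\le 4(a^2+b^2+c^2)$ split performed above---and produces the same form of Volterra inequality for $\widetilde N_p$ as in Part~(1), but with $L_{b,\delta},L_{\sigma,\delta}$ in place of $L_b,L_\sigma$. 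The same fractional Gronwall then yields~\eqref{E:MomBdU-2}.

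The main technical obstacle I anticipate is the bookkeeping of constants: extracting the exact numerical values in the exponent requires a sharp application of the BDG constant, a clean Minkowski rearrangement for the $(p/2)$-norm of the quadratic variation, and careful accounting for the two iterations of the weakly singular kernel. The conceptual structure is otherwise standard, and once the Volterra inequality for $N_p^2$ is in place, the entire proof reduces to the fractional Gronwall analysis of a scalar inequality.
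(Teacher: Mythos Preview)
Your approach is conceptually sound but differs from the paper's. The paper does \emph{not} derive a Volterra inequality and invoke a fractional Gronwall lemma; instead it works with the exponentially weighted norm
\[
  \mathcal{N}_{\kappa,p}(u) \coloneqq \sup_{t\ge 0}\sup_{x\in\mathbb{T}} e^{-\kappa t}\|u(t,x)\|_p
\]
applied to the Picard iterates $u_n$. After the BDG/Minkowski step and the estimate $\int_0^1 G_{t-s}^2(x-y)\,\ud y = G_{2(t-s)}(0)\le 1+\sqrt{2\pi}/\sqrt{t-s}$, one obtains a recursion of the form
\[
  \mathcal{N}_{\kappa,p}(u_n) \le \|u_0\|_\infty + \Bigl(\tfrac{L_b}{\kappa} + 2\sqrt{p}\,L_\sigma\bigl(\tfrac{\pi}{\sqrt{\kappa}}+\tfrac{1}{2\kappa}\bigr)^{1/2}\Bigr)\,\mathcal{N}_{\kappa,p}(u_{n-1})\,.
\]
The explicit constants $4L_b$ and $2^{16}\pi^2 p^2 L_\sigma^4$ arise by choosing $\kappa$ so that each of the three pieces of the multiplier is below a fixed threshold ($1/4$, $1/8$, $1/8$), which makes the iteration contractive with factor $\le 1/2$; this is also where the prefactor $2^p$ and, in Part~(2), the shifts $C_{b,\delta}/(4L_{b,\delta})$ come from --- not from the $(a+b+c)^2\le 4(\cdots)$ split you propose. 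Two concrete issues with your write-up: (i) there is no fractional Gronwall lemma in Appendix~\ref{S:Appendix} (it contains only the comparison principle and the heat-kernel increment estimate), so you would have to supply one; and (ii) after squaring, your drift term carries an extra factor of $t$, which makes the resulting kernel non-autonomous and the constant tracking via Gronwall less clean than you suggest. Your route would yield a bound of the same qualitative form, but recovering the \emph{exact} constants in~\eqref{E:MomBdU-1}--\eqref{E:MomBdU-2} is much more transparent via the weighted-norm contraction.
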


\begin{proof}
  (1) The method of proof is standard, (e.g.~\cite{khoshnevisan:14:analysis}),
  we just sketch the key steps. Consider Picard iteration, let $$ u_1(t,x) =
  \int_0^1 G_t(x-y)u_0(y)\ud y\,, $$ and define $u_n$ iteratively using
  \eqref{E:mild} with the right hand side $u$ replaced by $u_{n-1}$,
  \begin{equation}
    \begin{aligned}\label{E:Picard}
        u_n(t,x) = \int_0^1 G_t(x-y) u_0(y) \ud y
         + & \int_0^t \int_0^1 G_{t-s}(x-y)b(u_{n-1}(s,y))\ud y \ud s \\
        + & \int_0^t \int_0^1 G_{t-s}(x-y) \sigma(u_{n-1}(s,y)) W(\ud s,\ud y )\,.
    \end{aligned}
  \end{equation}
  For $p \ge 2$, apply the $\Norm{\cdot}_p$ norm, along with the
  Burkholder-Davis-Gundy (see, e.g., the version in Remark~2.2
  of~\cite{foondun.khoshnevisan:09:intermittence}) and Minkowski inequalities
  to~\eqref{E:Picard} to see that
  \begin{align}\label{E_:BDG}\notag
    \Norm{u_n(t,x)}_p \le
    & \left|\int_0^1 G_t(x-y) u_0(y) \ud y \right| +  L_b\int_0^t \int_0^1 G_{t-s}(x-y) \Norm{u_{n-1}(s,y)}_p\ud y \ud s \\
    & + 2 \sqrt{p}L_{\sigma} \left(\int_0^t \int_0^1 G_{t-s}^2(x-y)  \Norm{u_{n-1}(s,y)}^2_p\ud y  \ud s\right)^{1/2}\,.
  \end{align}
  Consider the norm
  \begin{equation}\label{E:STN}
    \mathcal{N}_{\kappa,\: p} (u) \coloneqq
    \sup_{t \ge 0} \sup_{x\in \mathbb{T}} e^{-\kappa t} \Norm{u(t,x)}_p
    \quad \text{for ${\kappa > 0}$ and $p \ge 2$.}
  \end{equation}
  For $p \ge 2$ and ${\kappa > 0}$, apply the norm in~\eqref{E:STN} to both
  sides of the above inequality to obtain that
  \begin{align*}
    \mathcal{N}_{\kappa,\: p}(u_n)
    \le & \|u_0\|_{\infty} + L_b\int_0^t e^{-\kappa(t-s)} \; \mathcal{N}_{\kappa, \: p}(u_{n-1}) \ud s \\
        & + 2 \sqrt{p} L_{\sigma}\left(\int_0^t \int_0^1 e^{-2(t-s)\kappa} G_{t-s}^2(x-y)  \: \mathcal{N}_{\kappa, \: p}(u_{n-1})^2 \ud y  \ud s\right)^{1/2}\,.
  \end{align*}
  By the semigroup property of $G_t(x)$ we have that
  \begin{align*}
    \int_0^1 G_{t-s}^2(x-y) \ud y  = G_{2(t-s)} (0) \le \left(1+ \frac{\sqrt{2\pi}}{\sqrt{t-s}}\right)\,,
  \end{align*}
  where the inequality follows from (2.22)
  of~\cite{chen.ouyang.ea:23:parabolic}.

  Therefore,
  \begin{align}\label{E:STN-Contraction}
    \mathcal{N}_{\kappa,\: p}(u_n)
    \le \|u_0\|_\infty + \frac{1}{\kappa} L_b\: \mathcal{N}_{\kappa,\: p}(u_{n-1})
    + 2  \sqrt{p} L_{\sigma}\left({\frac{\pi}{\sqrt{\kappa}}} + \frac{1}{{2\kappa}}\right)^{1/2} \mathcal{N}_{\kappa,\: p}(u_{n-1})\,.
  \end{align}
  Now, the optimal condition on $\kappa$ such that~\eqref{E:STN-Contraction}
  becomes a contraction is that
  \begin{align}\label{E:OptimalKappa}
    \frac{L_b}{\kappa}
    + 2 \sqrt{p} L_\sigma \left( \frac{\pi}{\sqrt{\kappa}} + \frac{1}{2\kappa} \right)^{1/2}
    < 1.
  \end{align}
  It is clear that the left-hand side of \eqref{E:OptimalKappa} is monotonically
  decreasing in $\kappa$. Therefore, there exists an optimal value $\kappa_*
  \coloneqq \kappa_*(L_b, L_\sigma, p)$, defined as the smallest positive real
  number for which the inequality in \eqref{E:OptimalKappa} becomes an equality.
  In principle, $\kappa_*$ can be solved explicitly, which would require solving
  a quartic polynomial equation. However, the explicit expression is rather
  complicated and not particularly convenient for practical use. For our
  purpose, it suffices to find a sub-optimal value of $\kappa$ that satisfies
  the inequality~\eqref{E:OptimalKappa}. In particular, the following conditions
  will be sufficient for~\eqref{E:STN-Contraction} to become a contraction:
  \begin{align*}
    \frac{L_b}{\kappa} < \frac{1}{4}, \quad
    2 \sqrt{p} L_\sigma \left( \frac{\pi}{\sqrt{\kappa}} \right)^{1/2} < \frac{1}{8},
    \quad \text{and} \quad
    2 \sqrt{p} L_\sigma \left( \frac{1}{2\kappa} \right)^{1/2} < \frac{1}{8}.
  \end{align*}
  From these inequalities, we readily obtain
  \begin{align*}
    \kappa > 4 L_b, \quad
    \kappa > 2^{16} \pi^2 p^2 L_{\sigma}^4, \quad
    \text{and} \quad
    \kappa > {2^7p L_\sigma^2}.
  \end{align*}
  Hence, we may take $\kappa = 4 L_b + 2^{16} \pi^2 p^2 L_{\sigma}^4$. Finally,
  iterate the inequality~\eqref{E:STN-Contraction} for $\mathcal{N}_{\kappa, \:
  p}(u_n)$ we obtain that
  \begin{equation}
    \sup_{n}\mathcal{N}_{\kappa, \: p}(u_n)< \infty\,.
  \end{equation}
  Standard iteration shows that $u_n$ converges to $u$ in the
  $\mathcal{N}_{\kappa, \: p}$ norm, and thus we obtain the bound in part (1) of
  Proposition~\ref{P:MomBdU}. \bigskip

  \noindent(2) In this case, we apply the same argument as in part (1) but
  replace the linear growth bound $\left|b(z)\right|\le L_bz$ (respectively,
  $\left|\sigma(z)\right|\le L_\sigma z$) with the linear growth estimate
  provided by Lemma~\ref{L:LinearGrowth}.
\end{proof}

In the next proposition, we study the Hölder regularity of the solution. As will
be demonstrated in the proof of Theorem~\ref{T:SubCRT}, it is necessary to
explicitly keep track of the dependence on the Lipschitz coefficients
$L_{\sigma}$ and $L_b$.

\begin{proposition}\label{P:Holder}
  Let $u$ be a solution to the stochastic heat equation (SHE)~\eqref{E:SHE} with
  initial condition $u_0$, where $u_0$ is Hölder continuous with exponent
  $\gamma > 0$. Define
  \begin{align*}
    |u_0|_{\gamma} \coloneqq \sup_{x, x' \in \mathbb{T}} \frac{|u_0(x) - u_0(x')|}{|x - x'|^{\gamma}}.
  \end{align*}
  {Fix an arbitrary $p \ge 2$}. We have the
  following two cases: \bigskip

  \noindent(1) Assume that both $b$ and $\sigma$ are globally Lipschitz. For all
  $t, t'\in [0, {\infty})$ with {$t < t'$}
  {and $t'-t<1$}, $x, x'\in \mathbb{T}$ and for all $\beta
  \in (0, 1/2\wedge \gamma)$, there exists a constant $C_\beta$ that only
  depends on $\beta$, such that
  \begin{align}\label{E:SpaceInc}
    \begin{aligned}
    \Norm{u(t,x)-u(t,x')}_p
    \le {C_\beta} |x-x'|^{\beta} \Bigl\{
     & |u_0|_{\gamma} +  |b(0)|\,t^{1-\beta/2}                             \\
     & + \sqrt{p}\,|\sigma(0)|\left(t^{1/4-\beta/2}+t^{1/2-\beta/2}\right) \\
     & +  L_b M e^{C H t} H^{\beta/2-1}                                    \\
     & +  \sqrt{p}L_\sigma M e^{C H t} \left(H^{\beta/2-1/4} +  H^{\beta/2-1/2}\right)
    \Bigr\}
    \end{aligned}
  \end{align}
  and
  \begin{align}\label{E:TimeInc}
    \begin{aligned}
    \Norm{u(t,x)-u(t',x)}_{p}
    \le {C_\beta} (t'-t)^{\beta/2}\Bigl\{
      & |u_0|_{\gamma} + |b(0)|\left(1 + t^{1-\beta/2}\right)                    \\
      & + \sqrt{p} |\sigma(0)| \left( 1 + t^{1/4-\beta/2}+t^{1/2-\beta/2}\right) \\
      & + L_b M e^{C H t'} H^{\beta/2-1}                                         \\
      & + \sqrt{p} L_{\sigma} M  e^{C H t'} \left(H^{\beta/2-1/4}+H^{\beta/2-1/2}\right)  \Bigr\},
    \end{aligned}
  \end{align}
  where the constant $C$ appearing in the exponents is a generic constant and
  the constants $H$ and $M$ are defined as follows:
  \begin{align}\label{E:H&M}
    \begin{aligned}
      H & \coloneqq H\left(p, L_b, L_\sigma\right) = L_b+p^2L_{\sigma}^4, \quad \text{and} \\
      M & \coloneqq M(u_0, b,\sigma) = \|u_0\|_{\infty}+ \frac{|b(0)|}{L_b} + \frac{|\sigma(0)|}{L_{\sigma}}\,.
    \end{aligned}
  \end{align}

  \noindent(2) Assume that for some $\delta > 0$, both $b$ and $\sigma$ satisfy
  conditions in Lemma~\ref{L:LinearGrowth} with the same $\delta$. Let the
  constants $C_{b, \delta}$ and $L_{b, \delta}$ (respectively, $C_{\sigma,
  \delta}$ and $L_{\sigma, \delta}$) are defined as in
  Lemma~\ref{L:LinearGrowth}, with $f$ replaced by $b$ (respectively,
  $\sigma$). Then, the moment estimates for the spatial and temporal increments
  in part (1) still hold, but with $b(0)$, $\sigma(0)$, $L_b$, and $L_\sigma$
  replaced by $C_{b,\delta}$, $C_{\sigma,\delta}$, $L_{b, \delta}$, and
  $L_{\sigma, \delta}$, respectively.

\end{proposition}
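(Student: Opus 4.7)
The plan is to split the mild formulation~\eqref{E:mild} into three pieces $u = I_0 + I_1 + I_2$ (the heat semigroup applied to $u_0$, the drift integral, and the It\^{o}--Walsh stochastic integral) and to estimate the spatial and temporal increments of each piece separately. Under part~(1), the linear bounds $|b(z)| \le |b(0)| + L_b|z|$ and $|\sigma(z)| \le |\sigma(0)| + L_\sigma|z|$ together with Proposition~\ref{P:MomBdU}(1) (after taking $p$-th roots) yield $\Norm{u(s,y)}_p \le C\,M\,e^{CHs}$ with $H,M$ as in~\eqref{E:H&M}; this is the sole source of the exponential factor $e^{CHt}$ in the claimed bounds.

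For the spatial increment at fixed $t$, $I_0$ is handled by the H\"older assumption on $u_0$, using $|x-x'|^{\gamma} \le |x-x'|^{\beta}$ on $\mathbb{T}$ since $\beta<\gamma$. For $I_1$, I would apply Minkowski's inequality together with the kernel estimate $\int_0^1 |G_r(x-y) - G_r(x'-y)|\,\ud y \le C\,|x-x'|^{\beta}\,r^{-\beta/2}$ from the appendix and split the integrand into its $|b(0)|$- and $L_b \Norm{u}_p$-contributions. The first piece gives the $|b(0)|\,t^{1-\beta/2}$ term directly; the second, after the change of variables $r = t-s$ and extension of the integration domain to $[0,\infty)$, becomes a Gamma integral $\int_0^\infty r^{-\beta/2} e^{-CHr}\,\ud r = \Gamma(1-\beta/2)(CH)^{\beta/2-1}$, producing precisely the $H^{\beta/2-1}$ factor multiplying $L_b M e^{CHt}$. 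For $I_2$, I would apply the Burkholder--Davis--Gundy inequality as in~\eqref{E_:BDG} and use the $L^2$ kernel-difference bound together with the decomposition $G_{2r}(0) \le 1 + C/\sqrt{r}$; this cleanly yields two separate contributions whose time integrals, after the same exponential-weighted change of variables, produce the $H^{\beta/2-1/4}$ and $H^{\beta/2-1/2}$ summands, along with the corresponding $t^{1/4-\beta/2}$ and $t^{1/2-\beta/2}$ contributions from the $|\sigma(0)|$ part.

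The temporal increment follows the same scheme after writing $I_j(t',x) - I_j(t,x) = \int_0^t [G_{t'-s} - G_{t-s}](\cdot)\,\ud s + \int_t^{t'} G_{t'-s}(\cdot)\,\ud s$ for $j=1,2$. The first integral is estimated by the time-increment kernel bounds $\int_0^1 |G_{t'-s}(x-y) - G_{t-s}(x-y)|\,\ud y \le C(t'-t)^{\beta/2}(t-s)^{-\beta/2}$ and its $L^2$ analogue, reproducing the $L_b$- and $L_\sigma$-dependent terms exactly as in the spatial case (with $t'$ in the exponential because the sup is taken up to $t'$). The short-interval contributions over $[t,t']$ yield factors $(t'-t)^{1-\beta/2}$, $(t'-t)^{1/4-\beta/2}$ and $(t'-t)^{1/2-\beta/2}$, which under the standing hypothesis $t'-t < 1$ are uniformly bounded by $1$; this explains both the ``$+1$'' terms multiplying $|b(0)|$ and $|\sigma(0)|$ in~\eqref{E:TimeInc} and the role of the $t'-t<1$ hypothesis. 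Part~(2) is then obtained by running the identical argument with $|b(0)|,|\sigma(0)|,L_b,L_\sigma$ replaced throughout by $C_{b,\delta},C_{\sigma,\delta},L_{b,\delta},L_{\sigma,\delta}$ via Lemma~\ref{L:LinearGrowth}, and invoking Proposition~\ref{P:MomBdU}(2) in place of~(1).

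The main obstacle I anticipate is the careful bookkeeping of the $H$-exponents: the exponential-weighted Gamma-integral trick trades the usual $t^{1-\beta/2}$ (or $t^{1/2-\beta}$) prefactor for the corresponding negative power of $H$, which is what makes the bound usable for the stopping-time argument in Theorem~\ref{T:SubCRT}. Keeping the $\tfrac14$ and $\tfrac12$ exponents coming from the $G_{2r}(0) \le 1 + C/\sqrt{r}$ split straight across both spatial and temporal estimates, and ensuring that the ``$|b(0)|/\sigma(0)$ constant'' pieces and the ``$L_b/L_\sigma$ linear'' pieces are tracked separately rather than collapsed via $M$, is the one step where a direct appeal to existing H\"older regularity results will not suffice.
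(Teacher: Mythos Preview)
Your plan is the paper's plan: the same three-term decomposition of the mild form, Lemma~\ref{L:Hodler-G} for kernel increments, the exponential-weighted Gamma integral $\int_0^\infty r^{-\alpha}e^{-CHr}\,\ud r \asymp H^{\alpha-1}$ to convert time-growth into negative $H$-powers, the split $G_{2r}(0)\le 1+C r^{-1/2}$ for the stochastic piece, and the same reduction of part~(2) to part~(1) via Lemma~\ref{L:LinearGrowth}.

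One point needs more than you indicate. In the time increment, the short-interval $[t,t']$ contributions coming from the $L_b$- and $L_\sigma$-parts (not only from $|b(0)|$ and $|\sigma(0)|$) must also carry the factors $H^{\beta/2-1}$, $H^{\beta/2-1/4}$, $H^{\beta/2-1/2}$ appearing in~\eqref{E:TimeInc}. Your route---bound $(t'-t)^{1-\beta/2}\le 1$ under $t'-t<1$---yields for instance $(t'-t)^{\beta/2}L_b M e^{CHt'}$ with \emph{no} negative $H$-power, which for large $H$ is strictly worse than the stated $(t'-t)^{\beta/2}L_b M e^{CHt'}H^{\beta/2-1}$ and would not reproduce~\eqref{E:TimeInc} as written. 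The paper recovers the missing $H$-power by an interpolation step: for the drift tail it uses $e^{CHt'}-e^{CHt}\le e^{CHt'}\min\bigl(CH(t'-t),1\bigr)\le e^{CHt'}(CH(t'-t))^{\beta/2}$, and for the stochastic tail it bounds $\int_0^{CH(t'-t)} s^{-1/2}e^{-s}\,\ud s\le \min\bigl((CH(t'-t))^{1/2},\Gamma(1/2)\bigr)\le C(CH(t'-t))^{\beta}$. This $\min$-trick is what couples the short interval to $H$ and is the one genuine ingredient missing from your sketch.
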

\begin{proof}
  It suffices to prove part (1). Part (2) can be handled in the same way as the
  proof of part (2) in Proposition~\ref{P:MomBdU}. Recall that we use $C$ to
  denote a generic constant that may change its value at each appearance. Let
  {$p \ge 2$ and} $\mathcal{M}(t,p)$ be an upper bound for
  the $L^p(\Omega)$ norm of $u(t,x)$ from~\eqref{E:MomBdU-1}, i.e.,
  \begin{align*}
    \mathcal{M}(t,p) = C \|u_0\|_\infty  \exp\left(C Ht\right).
  \end{align*}
  For any fixed $t > 0$ and $x, x' \in \mathbb{T}$, we have that
  \begin{align*}
    \Norm{u(t,x) - u(t,x')}_p
    \le       & |G_t*u_0(x) - G_t* u_0(x')|                                                                                                   \\
              & + \int_0^t \int_0^1 \left|G_{t-s}(x-y) - G_{t-s}(x'-y)\right| \Norm{b(u(s,y))}_p \ud y  \ud s                                 \\
              & + 2 \sqrt{p} \left(\int_0^t \int_0^1 \left|G_{t-s}(x-y)- G_{t-s}(x'-y)\right|^2 \|\sigma(u(s,y))\|_p\ud y  \ud s\right)^{1/2} \\
    \eqqcolon & \sum_{k=0}^2J_k\,.
  \end{align*}
  For any $\beta \in (0, 1/2\wedge \gamma)$ (this range of $\beta$ will be clear
  from the proof below), we have that
  \begin{align*}
    \MoveEqLeft |x-x'|^{-\beta} \Norm{u(t,x) - u(t,x')}_p \le |x-x'|^{-\beta}(J_0+J_1+J_2)\,.
  \end{align*}
  By Hölder continuity assumption of $u_0$,
  \begin{align*}
    |x-x'|^{-\beta}J_0
    & \le |x-x'|^{-\beta} \int_0^1 G_t(y) |u_0(x-y) - u_0(x'-y)| \ud y \\
    & \le {|u_0|_{\gamma}} \int_0^1 G_t(y) |x-x'|^{\gamma-\beta} \ud y
      \le {|u_0|_{\gamma}}\,,
  \end{align*}
  where we have used the fact that $\beta < \gamma$. For $J_1$, using
  Lemma~\ref{L:Hodler-G}, for any $\beta < 1$, we see that
  \begin{align*}
    |x-x'|^{-\beta}J_1
    \le & \int_0^t \int_0^1 (t-s)^{-\beta/2} \left[G_{2(t-s)}(x-y)+G_{2(t-s)}(x'-y)\right] \left(|b(0)| + L_b \|u(s,y)\|_p \right) \ud y  \ud s \\
    \le & C \int_0^t (t-s)^{-\beta/2}\left(|b(0)| + L_b \mathcal{M}(s,p)\right) \ud s                                                           \\
    \le & C \frac{t^{1-\beta/2}}{1-\beta/2} |b(0)| + C L_b M H^{\beta/2-1} \Gamma(1-\beta/2) \exp \left(C H t\right)\,.
  \end{align*}
  For $J_2$, for any $\beta \in (0, 1/2)$, by applying Lemma~\ref{L:Hodler-G},
  the Burkholder-Davis-Gundy inequality, and the Minkowski inequality, we obtain
  that
  \begin{align*}
       & |x-x'|^{-\beta} J_2                                                                                                                                                              \\
   \le & C \sqrt{p} \left(\int_0^t \int_0^1 (t-s)^{-\beta}\left[G_{2(t-s)}(x-y)+G_{2(t-s)}(x'-y)\right]^2 \left(|\sigma(0)| + L_{\sigma}\|u(s,y)\|_p \right)^2 \ud y  \ud s \right)^{1/2} \\
   \le & C  \sqrt{p} \left(\int_0^t (t-s)^{-\beta}(1+(t-s)^{-1/2})\left(|\sigma(0)| + L_{\sigma}\mathcal{M}(s,p)\right)^2 \ud s \right)^{1/2}                                             \\
   \le & C \sqrt{p} \:|\sigma(0)| \left[\frac{t^{\frac{1}{2}-\beta}}{\frac{1}{2}-\beta} +\frac{t^{1-\beta}}{1-\beta} \right]^{1/2}                                                        \\
       & + C \sqrt{p} L_{\sigma} M  \exp\left( C Ht\right) \left(\Gamma(1/2-\beta)  H^{\beta-1/2} +\Gamma({1}-\beta) H^{\beta-1} \right)^{1/2}.
  \end{align*}
  By combining these three bounds, we see that {for all $\beta\in(0,1/2\wedge\gamma)$}
  \begin{align*}
    \MoveEqLeft \Norm{u(t,x) - u(t,x')}_p \\
    \le
     & C |u_0|_{\gamma} |x-x'|^{\beta}+ C |x-x'|^{\beta} L_b M H^{\beta/2-1} \Gamma(1-\beta/2) \exp \left( C H t\right) \\
     & + C \sqrt{p} |x-x'|^{\beta} L_{\sigma} M \exp \left( C H t\right) \left[\Gamma(1/2-\beta) H^{\beta - 1/2} + \Gamma(1-\beta) H^{\beta-1}\right]^{1/2}                                                   \\
     & + C |x-x'|^\beta|b(0)|\frac{t^{1-\beta/2}}{1-\beta/2} + C\sqrt{p}|x-x'|^\beta|\sigma(0)|\left(\frac{t^{\frac{1}{2}-\beta}}{\frac{1}{2}-\beta}+\frac{t^{1-\beta}}{1-\beta}\right)^{1/2}\,,
  \end{align*}
  which proves~\eqref{E:TimeInc} after simplification. \bigskip

  Similarly, for $0 \le t < t'$,
  \begin{align*}
  \Norm{u(t',x) - u(t,x)}_p 
  \le       & \left|\int_0^1 (G_{t'}(x-y) - G_t(x-y)) u_0(y)\ud y \right|                               \\
            & + \Norm{\int_0^t \int_0^1 [G_{t'-s}(x-y) - G_{t-s}(x-y)]b(u(s,y)) \ud y \ud s}_p          \\
            & + \Norm{\int_t^{t'} \int_0^1 G_{t'-s}(x-y) b(u(s,y))\ud y \ud s}_p                        \\
            & + \Norm{\int_0^t \int_0^1 [G_{t'-s}(x-y) - G_{t-s}(x-y)]\sigma(u(s,y)) W(\ud s,\ud y )}_p \\
            & + \Norm{\int_t^{t'} \int_0^1 G_{t'-s}(x-y) \sigma(u(s,y))W(\ud s,\ud y )}_p               \\
  \eqqcolon & \sum_{i=0}^4 I_i\,.
  \end{align*}
  For $I_0$, from the above proof, we see that $G_tu_0(x) \coloneqq \int_0^1
  G_t(x-y)u_0(y)\ud y$ is Hölder continuous with order $\gamma$, and thus using
  semigroup property of the heat kernel we write
  \begin{align*}
    G_{t'}u_0(x) - G_tu_0(x) = \int_0^1 G_{t'-t}(y)[G_tu_0(x-y)-G_tu_0(x)]\ud y\,,
  \end{align*}
  using the bound
  \begin{equation}
    G_t(x)\le {2\left( 1+\sqrt{\frac{t}{2\pi}} \right)} p_t(x)\,,
  \end{equation}
  where $p_t(x) \coloneqq (2\pi t)^{-1/2}e^{-x^2/{2t}}$ is the heat kernel on
  the real line $\mathbb{R}$ (see, e.g. Lemma 2.1
  in~\cite{chen.ouyang.ea:23:parabolic}), we see that
  \begin{align*}
    I_0
     =  & \left|G_{t'}u_0(x) - G_tu_0(x)\right|
    \le   |u_0|_{\gamma}\int_0^1 G_{t'-t}(y)|y|^{\gamma}\ud y\\
    \le & {2\left( 1+\sqrt{\frac{t'-t}{2\pi}} \right)} |u_0|_{\gamma}\int_0^1 p_{t'-t}(y)|y|^{\gamma}\ud y \\
    \le & {C\left( 1+\sqrt{t'-t} \right)} |u_0|_{\gamma}(t'-t)^{{\gamma/2}}\,.
  \end{align*}
  For $I_1$, using Lemma \ref{L:Hodler-G} we see that
  \begin{align*}
    I_1
    \le & (t'-t)^{\beta/2}\int_0^t (t-s)^{-\beta/2}(|b(0)| + L_b \mathcal{M}(s,p)) \ud s \\
    \le & C(t'-t)^{\beta/2} \frac{t^{1-\beta/2}}{1-\beta/2} |b(0)| + (t'-t)^{\beta/2} C L_b M H^{\beta/2-1} \Gamma(1-\beta/2) \exp \left(C H t\right).
  \end{align*}
  As for $I_2$,
  \begin{align*}
    I_2
    \le & \int_t^{t'}(|b(0)| + L_b \mathcal{M}(s,p)) \ud s                                         \\
    \le & (t'-t) |b(0)| + CL_b M  H^{-1} \left(\exp\left(CHt'\right) - \exp\left(CHt\right)\right) \\
    \le & (t'-t) |b(0)| + C(t'-t)^{\beta/2}L_b M  H^{-1+\beta/2} \exp\left(CHt'\right).
  \end{align*}
  Similarly, for $I_3$ and $I_4$, we have
  \begin{align*}
    I_3
    \le & C \sqrt{p} (t'-t)^{\beta/2} \left(\int_0^t [(t'-s)^{-1/2} +1] (t-s)^{-\beta}\left(|\sigma(0)| + L_{\sigma}\mathcal{M}(s, p)\right)^2 \ud s \right)^{1/2} \\
    \le & C \sqrt{p} (t'-t)^{\beta/2} \bigg(\int_0^t \left[(t-s)^{-\frac{1}{2} -\beta} + (t-s)^{-\beta}\right]|\sigma(0)|^2 \ud s                                  \\
        & \qquad  + \int_0^t \left[(t-s)^{-\frac{1}{2} -\beta} + (t-s)^{-\beta}\right] L_{\sigma}^2 \mathcal{M}(s,p)^2 \ud s \bigg)^{1/2}                          \\
    \le & C \sqrt{p} (t'-t)^{\beta/2} |\sigma(0)| \left[\frac{t^{\frac{1}{2}-\beta}}{\frac{1}{2}-\beta} +\frac{t^{1-\beta}}{1-\beta} \right]^{1/2}                 \\
        & +C \sqrt{p} L_{\sigma} M \exp\left(C H t\right)\left(\Gamma(1/2-\beta)  H^{\beta-1/2} +\Gamma({1}-\beta)  H^{\beta-1} \right)^{1/2}\,,
  \end{align*}
  and
  \begin{align*}
    I_4 & \le C \sqrt{p} \left(\int_t^{t'} ((t'-s)^{-1/2}+1)\left(|\sigma(0)| + L_{\sigma}\mathcal{M}(s, p)\right)^2 \ud s \right)^{1/2} \\
        & \le C \sqrt{p} \left[|\sigma(0)|^2\left((t'-t)^{1/2} + (t'-t)\right) +
              M^2 L_{\sigma}^2\left(\int_t^{t'}(t'-s)^{-1/2}e^{2CHs}\ud s + \int_t^{t'}e^{2CHs}\ud s\right) \right]^{1/2}\,.
  \end{align*}
  Using change of variable we obtain that
  \begin{align*}
    \int_t^{t'}(t'-s)^{-1/2}e^{2CHs}\ud s = e^{2CHt'}(2CH)^{-1/2} \int_0^{2CH(t'-t)} s^{-1/2}e^{-s}\ud s\,,
  \end{align*}
  and the integral can be bounded by both $$ \int_0^{2CH(t'-t)} s^{-1/2}\ud
  s\quad \text{and}\quad \int_0^{\infty} s^{-1/2}e^{-s}\ud s\,, $$ which yields
  that
  \begin{align*}
  \int_t^{t'}(t'-s)^{-1/2}e^{2CHs}\ud s \le & e^{2CHt'}(2CH)^{-\frac12} \min([2CH(t'-t)]^{\frac12}, 1)\\
  \le & e^{2CHt'}(2CH)^{-\frac12+\beta}(t'-t)^{\beta}\,.
  \end{align*}
  In a similar way we get $$ \int_t^{t'}e^{2CHs}\ud s \le (2CH)^{\beta-1}
  e^{2CHt'}(t'-t)^{\beta}\,. $$ Thus we conclude that
  \begin{align*}
    I_4 \le C\sqrt{p} \left[|\sigma(0)|^2((t'-t)^{\frac12}+(t'-t))+M^2 (t'-t)^{\beta} L_{\sigma}^2 e^{2CHt'}[(2CH)^{\beta-\frac12}+(2CH)^{\beta-1}]\right]^{\frac{1}{2}} \,.
  \end{align*}
  Combining all the estimates shows that
  \begin{align*}
          \MoveEqLeft \Norm{u(t,x)- u(t',x)}_p                                                                                                                     \\
    \le & C(t'-t)^{\beta/2} \frac{t^{1-\beta/2}}{1-\beta/2} |b(0)|+ C\left(1+\sqrt{t'-t}\right)(t'-t)^{\gamma/2}|u_0|_{\gamma} + (t'-t)|b(0)|                      \\
        & + C(t'-t)^{\beta/2} \Gamma(1-\beta/2) L_b M H^{\beta/2-1}\exp \left(CHt\right)                                                                           \\
        & + C(t'-t)^{\beta/2} L_b M H^{\beta/2-1}\exp \left(CHt'\right)                                                                                            \\
        & + C \sqrt{p} (t'-t)^{\beta/2} L_{\sigma} M  \exp \left(CHt\right) \times \left(\Gamma(1/2-\beta) H^{\beta-1/2} + \Gamma(1-\beta)H^{\beta-1}\right)^{1/2} \\
        & + C\sqrt{p}(t'-t)^{\beta/2}|\sigma(0)|\left(\frac{t^{\frac{1}{2}-\beta}}{\frac{1}{2}-\beta}+\frac{t^{1-\beta}}{1-\beta}\right)^{1/2}                     \\
        & + C \sqrt{p} \left[(t'-t)^{1/2} + (t'-t)\right]^{1/2} |\sigma(0)|                                                                                        \\
        & + C (t'-t)^{\beta/2}L_{\sigma} M \left(H^{\beta-\frac12}+H^{\beta-1}\right)^{1/2}\exp \left(CHt'\right).
  \end{align*}
  Finally, we can use the fact that $0 \le t'-t < 1$ and $\beta \in (0,
  1/2\wedge \gamma)$ to simplify the above bound to obtain~\eqref{E:TimeInc}.
  This completes the proof of Proposition~\ref{P:Holder}.
\end{proof}

\begin{proposition}\label{P:IdentifySol}
  Let $\{u_n\}_{n\ge 1}$ be a sequence of solutions to~\eqref{E:SHE} with
  respective {nonnegative} initial conditions $u_{0,n}$,
  drift terms $b_n$, and diffusion coefficients $\sigma_n$. Assume that $u_{0,
  n}$ are uniformly bounded, namely,
  \begin{align*}
    \sup_{n\ge 1} \sup_{x\in\mathbb{T}} u_{0,n}(x) < \infty\,.
  \end{align*}
  Suppose $b_n$ and $\sigma_n$ satisfy the conditions in
  Lemma~\ref{L:LinearGrowth} with the same constant $\delta$. Let $L_{b_n,
  \delta}$ and $L_{\sigma_n, \delta}$ be the corresponding Lipschitz constants
  given in Lemma~\ref{L:LinearGrowth}. Assume that for some $L_\delta > 0$,
  \[
    \sup_{n\ge 1} \max\left(L_{b_n, \delta}, L_{\sigma_n, \delta}\right) < L_\delta < \infty\:.
  \]
  If the following limits exist
  \begin{align}
     & u(t,x) \coloneqq \lim_{n \to \infty} u_n(t,x), \quad \text{a.s., for all $t \ge 0$ and $x\in \mathbb{T}$},\label{E:IdentifySol} \\
     & b(z)\coloneqq\lim_{n\to \infty}b_n(z),\quad\text{uniformly for $z \ge 0$}, \label{E:b_n} \\
     & \sigma(z)\coloneqq\lim_{n\to \infty}\sigma_n(z),\quad\text{uniformly for $z \ge 0$},\label{E:sigma_n}
  \end{align}
  and if, in addition,
  \begin{align}\label{E_:UniLp}
    \sup_{n\ge 1} \sup_{t\in[0,T]}\sup_{x\in\mathbb{T}} \E\left(\left|u_n(t,x)\right|^p \right)<\infty\,,\quad\text{for all $p \ge 2$ and $T>0$\,,}
  \end{align}
  then $u$ is also the $L^p(\Omega)$ limit of $u_n$ and solves~\eqref{E:SHE}.
\end{proposition}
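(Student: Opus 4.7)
The plan is to first upgrade the almost-sure pointwise convergence~\eqref{E:IdentifySol} to convergence in $L^p(\Omega)$, and then pass to the limit in each term of the mild formulation~\eqref{E:mild} written for $u_n$. For a fixed $p \ge 2$, pick any $q > p$: the uniform $L^q$ bound provided by~\eqref{E_:UniLp} makes $\{|u_n(t,x)|^p\}_{n\ge 1}$ uniformly integrable, so Vitali's convergence theorem gives
\[
  \lim_{n\to\infty} \Norm{u_n(t,x) - u(t,x)}_p = 0 \quad \text{for every $(t,x) \in (0,\infty)\times \mathbb{T}$,}
\]
and the limit field $u$ inherits adaptedness, joint measurability, and the uniform $L^p$ bound in~\eqref{E_:UniLp}.

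Next I would promote this to the convergence of the nonlinear quantities $b_n(u_n(s,y)) \to b(u(s,y))$ and $\sigma_n(u_n(s,y)) \to \sigma(u(s,y))$ in $L^p(\Omega)$ for every $(s,y)$. Via the triangle inequality
\[
  |b_n(u_n(s,y)) - b(u(s,y))| \le \|b_n - b\|_\infty + |b(u_n(s,y)) - b(u(s,y))|,
\]
the first term vanishes uniformly by~\eqref{E:b_n}. The uniform convergences~\eqref{E:b_n} and~\eqref{E:sigma_n} together with the assumed uniform bound on $L_{b_n,\delta}$ and $L_{\sigma_n,\delta}$ transfer the hypotheses of Lemma~\ref{L:LinearGrowth} to the limits $b$ and $\sigma$ (with constant $L_\delta$ and envelopes $C_{b,\delta}$, $C_{\sigma,\delta}$ obtained as limsups). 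Since $b$ is thus continuous on $[0,\infty)$, the second term tends to $0$ almost surely, and the uniform linear growth estimate combined with~\eqref{E_:UniLp} for $q>p$ supplies the uniform integrability needed to promote this to $L^p(\Omega)$ convergence; the same argument applies to $\sigma_n(u_n(s,y))$.

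Finally, I would pass to the limit in each term of~\eqref{E:mild}. The initial-condition term converges by taking $t \downarrow 0$ (or by the implicit convergence $u_{0,n} \to u_0$ in the applications of interest, where $u_{0,n}$ is typically chosen to equal a fixed $u_0$). The Bochner drift integral is handled by Minkowski's inequality and dominated convergence: the integrand $G_{t-s}(x-y)\,\Norm{b_n(u_n(s,y)) - b(u(s,y))}_p$ tends to $0$ pointwise by Step~2 and is dominated by a multiple of $G_{t-s}(x-y)$ thanks to Lemma~\ref{L:LinearGrowth} and the uniform $L^p$ bound. The main obstacle is the stochastic integral, for which I would apply the Burkholder--Davis--Gundy bound used in~\eqref{E_:BDG}:
\begin{align*}
  \MoveEqLeft \Norm{\int_0^t\!\!\int_0^1 G_{t-s}(x-y)\bigl[\sigma_n(u_n(s,y))-\sigma(u(s,y))\bigr]\,W(\ud s,\ud y)}_p^2 \\
  &\le 4p \int_0^t\!\!\int_0^1 G_{t-s}^2(x-y)\,\Norm{\sigma_n(u_n(s,y))-\sigma(u(s,y))}_p^2\,\ud y\,\ud s.
\end{align*}
The integrand tends pointwise to zero by Step~2, and is dominated by $C\,G_{t-s}^2(x-y)$ using the uniform linear growth of $\sigma_n$ and~\eqref{E_:UniLp}. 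Since $\int_0^t\!\!\int_0^1 G_{t-s}^2(x-y)\,\ud y\,\ud s = \int_0^t G_{2(t-s)}(0)\,\ud s \le C(1+\sqrt{t})$ is finite by the bound recorded in Proposition~\ref{P:MomBdU}, the dominated convergence theorem yields the desired limit. This step is where the availability of~\eqref{E_:UniLp} for \emph{all} $p \ge 2$ is essential: we need strictly more moments than those tracked on the left-hand side in order to produce the integrable envelope.
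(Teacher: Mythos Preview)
Your approach is essentially the paper's: upgrade almost-sure convergence to $L^p$ via uniform integrability, check that $b$ and $\sigma$ inherit the linear-growth structure of Lemma~\ref{L:LinearGrowth}, then pass to the limit term by term in~\eqref{E:mild} using dominated convergence. The paper carries out the last step in $L^2$ with It\^o's isometry rather than in $L^p$ with BDG, but this is cosmetic. One point to tighten: your treatment of the initial-condition term is imprecise---``taking $t\downarrow 0$'' does not address convergence of $\int_0^1 G_t(x-y)u_{0,n}(y)\,\ud y$ at fixed $t$, and the pointwise convergence $u_{0,n}\to u_0$ is not special to ``applications of interest'' but follows directly from hypothesis~\eqref{E:IdentifySol} evaluated at $t=0$ (as the paper notes right after the statement), after which dominated convergence with the uniform bound on $u_{0,n}$ finishes the job.
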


Note that condition~\eqref{E:IdentifySol} implies that the initial condition of
$u$, denoted by $u_0$, is equal to $\lim_{n \to \infty}u_{0,n}$.

\begin{proof}
  Notice that condition~\eqref{E:IdentifySol} says that $u(t,x)$ is the almost
  sure limit of $u_n(t,x)$, and thus {is adapted and}
  measurable with respect to $\mathcal{B}((0,\infty)\times \mathbb{T})\times
  \mathcal{F}$.  Hence, both conditions (1) and (2) in
  Definition~\ref{D:Solution} are satisfied. It remains to show that $u$
  satisfies conditions (3) and (4) in Definition~\ref{D:Solution}. \bigskip

  We first prove the following claim: \medskip

  \noindent\textbf{Claim:} Both $b$ and $\sigma$ are continuous functions on
  $[0,\infty)$ that satisfy the conditions in Lemma~\ref{L:LinearGrowth} with
  the same constant $\delta$. Moreover, the corresponding constants $(L_{b,
  \delta}, C_{b, \delta})$ and $(L_{\sigma, \delta}, C_{\sigma, \delta})$
  satisfy the following conditions:
  \begin{gather}\label{E_:Ldelta}
    L_{b, \delta} \vee L_{\sigma, \delta} \le L_\delta < \infty\,, \\
    C_{{b, \delta}} = \lim_{n \to \infty} C_{b_n, \delta}
    \quad \text{and} \quad C_{{\sigma, \delta}} = \lim_{n \to
    \infty} C_{\sigma_n, \delta}. \label{E_:Cb}
  \end{gather}

  It suffices to show the case of $b$. The assumption of uniform convergence
  in~\eqref{E:b_n} implies the continuity of $b$ in $[0,\infty)$. To
  prove~\eqref{E_:Ldelta}, for all $u, v\in[\delta,\infty]$, by the triangle
  inequality,
  \begin{align*}
    \left|b(u)-b(v)\right|
    \le & \left|b(u)-b_n(u)\right|+\left|b_n(u)-b_n(v)\right|+\left|b_n(v)-b(v)\right| \\
    \le & \left|b(u)-b_n(u)\right|+L_\delta|u-v|+\left|b_n(v)-b(v)\right| \to L_\delta\left|u-v\right|\,,
    \quad \text{as $n\to \infty$}\,.
  \end{align*}
  Hence, $b$ is Lipschitz on $[\delta,\infty)$ with the Lipschitz constant
  bounded by $L_\delta$, which proves~\eqref{E_:Ldelta}. As for~\eqref{E_:Cb},
  by the triangle inequality again, we have that
  \[
    \left|C_{b_n,\delta}-C_{b,\delta}\right|
     =   \left|\sup_{y \in [0,\delta]}|b_{n}(y)|-\sup_{y \in [0,\delta]}|b(y)|\right|
     \le \sup_{y \in [0,\delta]} \left||b_{n}(y)|-|b(y)|\right|
     \le \sup_{y \in [0,\delta]} \left|b_{n}(y)-b(y)\right|,
  \]
  where the right-hand converges to zero as $n\to \infty$ due to the uniform
  convergence assumption in~\eqref{E:b_n}. This proves~\eqref{E_:Cb}. Therefore,
  we have proved the claim. \bigskip

  Condition~\eqref{E_:UniLp} implies that $\sup_{n\ge 1}
  \E\left(\left|u_n(t,x)\right|^p \right)<\infty$ for all $p \ge 2$. This,
  together with the Chebyshev inequality, implies that the sequence $u_n(t,x)$
  is uniformly integrable on {$L^p(\Omega)$}. Hence, for all
  $t\in [0,T]$ and $x\in \mathbb{T}$,
  \begin{equation}\label{E_:L2Conv}
    \E (|u(t,x)|^p) = \lim_{n \to \infty} \E (|u_n(t,x)|^p)\,.
  \end{equation}
  Taking supremum over $t\in [0,T]$ and $x\in \mathbb{T}$ in the above limit, we
  can use condition~\eqref{E_:UniLp} to conclude that
  \begin{align}
    \sup_{t\in[0,T]}\sup_{x\in\mathbb{T}} \E\left(|u(t,x)|^p\right) < \infty\,.
  \end{align}
  As a consequence, condition (3) in Definition~\ref{D:Solution} is satisfied. \bigskip

  In order to show that $u$ satisfies condition (4) in
  Definition~\ref{D:Solution}, denote
  \begin{align}\label{E:U}
    \begin{aligned}
    U(t,x) \coloneqq & \int_0^1 G_{t-s}(x-y)u_0(y)\ud y+\int_0^t\int_0^1G_{t-s}(x-y)b(u(s,y))\ud y\ud s \\
                     & +\int_0^t\int_0^1 G_{t-s}(x-y) \sigma(u(s,y))W(\ud s, \ud y)\,.
    \end{aligned}
  \end{align}
  we have that
  \begin{align*}
     \E\left[(u(t,x)-U(t,x))^2\right]
     \le 2\E\left[\left(u(t,x)-u_n(t,x)\right)^2\right] + 2\E\left[\left(u_n(t,x)-U(t,x)\right)^2\right]\,.
  \end{align*}
  By It\^o's isometry,
  \begin{align*}
    \E\left[(u(t,x)-U(t,x))^2\right]
    \le       & 2\E \left[\left(u(t,x)-u_n(t,x)\right)^2\right]                                                                \\
              & + 6  \left(\int_0^1 G_{t}(x-y)\left(u_{0,n}(y)-u_{0}(y)\right) \ud y\right)^2                                  \\
              & + 6t \int_0^t\int_0^1G_{t-s}(x-y)\E\left[\left(b_n(u_n(s,y))-b(u(s,y))\right)^2\right]\ud y\ud s               \\
              & + 6  \int_0^t\int_0^1 G^2_{t-s}(x-y)\E\left[\left(\sigma_n(u_n)(s,y)-\sigma (u(s,y))\right)^2\right]\ud y\ud s \\
    \eqqcolon & I_{1,n}(t,x) + I_{2,n}(t,x) + I_{3,n}(t,x) + I_{4,n}(t,x)\,.
  \end{align*}
  It suffices to show that $\lim_{n\to \infty} I_{i,n} = 0$, for $i = 1, \dots,
  4$. The case $i = 1$ is proved in~\eqref{E_:L2Conv}. As for the case $i = 2$,
  notice that condition~\eqref{E:IdentifySol} applied to $t = 0$ and the uniform
  boundedness of $\{u_{0, n}\}$ together imply that
  \begin{align}\label{E_:u0bdd}
    u_0=\lim_{n \to \infty}u_{0,n} \quad \text{and} \quad
    \sup_{z\in \mathbb{T}} u_0(z)\le\sup_{n\ge 1}\sup_{z\in\mathbb{T}}u_{0,n}(z) < \infty.
  \end{align}
  Hence, an application of the dominated convergence theorem proves the case $i
  = 2$. As for the case $i = 3$, from~\eqref{E:b_n} and~\eqref{E:IdentifySol},
  we have
  \begin{align}\label{E_:bnas}
    b_n(u_n(s,y)) \to b(u(s,y))\,,\quad\text{a.s., for $s \in [0,T]$ and $y \in \mathbb{T}$}\,.
  \end{align}
  Now applying Lemma~\ref{L:LinearGrowth} to both $b_n$ and $b$, we see that,
  for all $p \ge 2$,
  \begin{align*}
    \E  \left( \left|b_n(u_n(s,y))-b(u(s,y))\right|^p\right)
    \le & 2^{p-1}\E\left(\left|b_n(u_n(s,y))\right|^p\right) + 2^{p-1}\E\left(\left|b(u(s,y))\right|^p\right)                                                                     \\
    \le & 4^{p-1}\sup_{n\ge 1}\left(C_{b_n, \delta}^p+C_{b, \delta}^p\right)                                                                                                      \\
        & +4^{p-1}L_\delta^p \sup_{n\ge 1}\sup_{s \in [0,T]} \sup_{y \in \mathbb{T}} \left[ \E\left(\left|u_n(s,y)\right|^p\right) + \E\left(\left|u(s,y)\right|^p\right)\right ] \\
     <  & \infty\,,
  \end{align*}
  which, together with~\eqref{E_:bnas}, implies that
  \begin{align*}
    \lim_{n\to \infty} I_{3,n}(t,x)
    = 6t \int_0^t\int_0^1G_{t-s}(x-y)\E\left[ \lim_{n \to \infty}\left(b_n(u_n(s,y))-b(u(s,y))\right)^2\right]\ud y\ud s
    = 0.
  \end{align*}
  The proof for the case $i = 4$ is similar to that of $i = 3$. Therefore,
  condition (4) in Definition~\ref{D:Solution} is satisfied. This completes the
  proof of Proposition~\ref{P:IdentifySol}.
\end{proof}

\section{Main result} \label{S:MainResults}

In this section, we state and prove our main result, Theorem~\ref{T:SubCRT}.

\begin{theorem}\label{T:SubCRT}
  (1) Let $u_0 \ge 0$ be bounded above and H\"{o}lder continuous with exponent
  $\gamma \in (0, 1)$. Let $b, \sigma : (0,\infty) \to \mathbb{R}$. Assume that
  there exist constants $\delta\in (0, 1)$, $A_1\in(0, 1)$, and $A_2\in (0,
  1/4)$ such that both
  \begin{align}\label{E:A1A2}
    \left|\frac{b(z)}{z}\right|      = O\left(\left(\log\left(1/z\right)\right)^{A_1}\right) \quad \text{and} \quad
    \left|\frac{\sigma(z)}{z}\right| = O\left(\left(\log\left(1/z\right)\right)^{A_2}\right)\,,
  \end{align}
  hold for all $z\in (0,\delta]$, and both $b$ and $\sigma$ are Lipschitz on
  $[\delta, \infty)$. To avoid trivialities, we also assume that $b$ and
  $\sigma$ are not both identically zero. Then there exists a global solution to
  SHE~\eqref{E:SHE} (in the sense of Definition~\ref{D:Solution}), that is
  nonnegative and satisfies
  \begin{align} \label{E:Mmt-SubCRT}
    \sup_{t \in [0,T]} \sup_{x \in \mathbb{T}} \E\left( u(t,x)^p \right) < \infty
    \quad \text{for all $p \ge 2$ and $T>0$}.
  \end{align}
  Moreover, this solution is unique and strictly positive in $C([0,\infty)
  \times \mathbb{T}, \mathbb{R})$ provided that $u_0$ is bounded away from zero,
  i.e.,
  \begin{align}\label{E:u_0>0}
    \inf_{z \in \mathbb{T}}u_0(z) > 0.
  \end{align}

  \noindent (2) Let $b_1$, $b_2$, and $\sigma$ satisfy all conditions in part
  (1), and let $u_{0,1}$ and $u_{0,2}$ be two bounded functions that are
  bounded away from zero. Assume that $b_1 \le b_2$ and $u_{0,1} \le u_{0,2}$.
  Let $u_1$ (resp. $u_2$) be the solution to~\eqref{E:SHE} with drift term $b =
  b_1$ (resp. $b = b_2$). Suppose that $u_1$ and $u_2$ share the same $\sigma$.
  Then with probability one, $u_1(t,x) \le u_2(t,x)$ for all $t > 0$ and $x \in
  \mathbb{T}$.
\end{theorem}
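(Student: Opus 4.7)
The plan is to implement the stopping-time strategy sketched in the introduction. For each small $\epsilon\in(0,\delta)$, I will regularize by defining $b_\epsilon(z):=b(z\vee\epsilon)$ and $\sigma_\epsilon(z):=\sigma(z\vee\epsilon)$ for $z\in\R$. By construction, these functions agree with $b,\sigma$ on $[\epsilon,\infty)$ and are globally Lipschitz with constants $L_{b_\epsilon}\lesssim(\log(1/\epsilon))^{A_1}$ and $L_{\sigma_\epsilon}\lesssim(\log(1/\epsilon))^{A_2}$, using the growth hypothesis~\eqref{E:A1A2} on $(0,\epsilon]$ combined with the Lipschitz property on $[\delta,\infty)$. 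Standard theory then produces a unique global mild solution $u_\epsilon$ of SHE~\eqref{E:SHE} with coefficients $(b_\epsilon,\sigma_\epsilon)$, and Propositions~\ref{P:MomBdU} and~\ref{P:Holder} give explicit $L^p(\Omega)$ and Hölder moment bounds in which the dependence on $\epsilon$ is tracked through $L_{b_\epsilon}$ and $L_{\sigma_\epsilon}$.

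Next, I will introduce the stopping times $\tau_\epsilon:=\inf\{t\ge 0:\inf_{x\in\mathbb{T}}u_\epsilon(t,x)\le\epsilon\}$, which are well defined since the Hölder modification of $u_\epsilon$ is continuous and $\mathbb{T}$ is compact. On $\{t<\tau_\epsilon\}$, we have $u_\epsilon>\epsilon$ pointwise, so $b_\epsilon(u_\epsilon)=b(u_\epsilon)$ and $\sigma_\epsilon(u_\epsilon)=\sigma(u_\epsilon)$. Thus $u_\epsilon$ genuinely solves the original mild equation~\eqref{E:mild} up to $\tau_\epsilon$. A localized pathwise-uniqueness argument for the Lipschitz problem (both $u_\epsilon$ and $u_{\epsilon'}$ satisfy the SPDE with coefficients $(b_{\epsilon'},\sigma_{\epsilon'})$ on $[0,\tau_\epsilon]$ since $u_\epsilon\ge\epsilon>\epsilon'$ there) gives $u_{\epsilon'}\equiv u_\epsilon$ on $[0,\tau_\epsilon)\times\mathbb{T}$ for any $\epsilon'<\epsilon$. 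One can then consistently set $u(t,x):=u_\epsilon(t,x)$ on $\{t<\tau_\epsilon\}$; hypothesis~\eqref{E:u_0>0} ensures $\tau_\epsilon>0$ a.s.\ once $\epsilon<\inf_z u_0(z)$.

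The crux of the proof is showing that $\tau_\epsilon\nearrow\infty$ a.s.\ as $\epsilon\to 0$. Working along $\epsilon_n=2^{-n}$, I will aim for summability of $\P(\tau_{\epsilon_n}\le T)$ and invoke Borel--Cantelli. To estimate $\P(\tau_\epsilon\le T)$, I will combine the $L^p$ moment bound of Proposition~\ref{P:MomBdU}(2) with the Hölder estimates of Proposition~\ref{P:Holder}(2) via a Kolmogorov-type chaining over a spacetime grid of mesh $\eta=\eta(\epsilon)$ in $[0,T]\times\mathbb{T}$: the event $\{\inf_{[0,T]\times\mathbb{T}}u_\epsilon\le\epsilon\}$ is dominated by Chebyshev estimates on the grid (using the grid-point probability $\P(u_\epsilon\le\epsilon)$ bounded via polynomial moments) together with a Hölder modulus of continuity across grid cells. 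The main obstacle is that $L_{\sigma_\epsilon}\sim(\log(1/\epsilon))^{A_2}$ enters the exponential factor $\exp(CHT)$ through $H\sim p^2L_{\sigma_\epsilon}^4\sim p^2(\log(1/\epsilon))^{4A_2}$; the restriction $A_2<1/4$ is precisely what allows this factor to remain dominated by any negative polynomial in $\epsilon$ when $p$ and $\eta$ are tuned appropriately, so that the chaining produces a summable bound. The condition $A_1<1$ plays an analogous but milder role for the drift.

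Once $\tau_{\epsilon_n}\nearrow\infty$ is established, $u$ is well defined on all of $(0,\infty)\times\mathbb{T}$, is continuous, nonnegative, and in fact strictly positive on any bounded time interval (since $u\ge\epsilon_n$ whenever $t<\tau_{\epsilon_n}$). Proposition~\ref{P:IdentifySol}, applied with the uniform moment bound from Proposition~\ref{P:MomBdU}(2) (the constants $C_{b_\epsilon,\delta}$, $C_{\sigma_\epsilon,\delta}$ stay uniformly bounded), identifies $u$ as a mild solution of~\eqref{E:SHE} and delivers~\eqref{E:Mmt-SubCRT}. Pathwise uniqueness reduces, by the same stopping-time device, to uniqueness for each Lipschitz regularization, which is classical. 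For part~(2), I will apply a standard comparison theorem for Lipschitz SPDEs (as in Kotelenez~\cite{kotelenez:92:comparison}) to the regularized pair $(u_{1,\epsilon},u_{2,\epsilon})$ with $b_{1,\epsilon}\le b_{2,\epsilon}$, common $\sigma_\epsilon$, and $u_{0,1}\le u_{0,2}$, and then pass to the limit $\epsilon\to 0$ using the almost-sure identification $u_{i,\epsilon}\to u_i$ on $\{t<\tau_\epsilon\}$ established in part~(1).
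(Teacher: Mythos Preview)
Your overall architecture (regularize, solve, define stopping times, glue, pass to the limit) matches the paper, and your treatment of consistency, uniqueness, and part~(2) is essentially what the paper does. The serious gap is in the step you identify as ``the crux'': your proposed bound on $\P(\tau_\epsilon\le T)$ does not work.

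You write that on a spacetime grid you will bound ``the grid-point probability $\P(u_\epsilon\le\epsilon)$ via polynomial moments.'' Moment bounds of the form $\E[u_\epsilon^p]\le C$ give no control on the lower tail $\P(u_\epsilon\le\epsilon)$; Chebyshev goes the wrong way. If instead you mean to control $\P(u_\epsilon(t_i,x_j)\le\epsilon)$ by $\P(|u_\epsilon(t_i,x_j)-u_0(x_j)|\ge c-\epsilon)$ and then use the increment moments from Proposition~\ref{P:Holder}, this also fails: on the \emph{full} interval $[0,T]$ there is no small parameter, and for $t_i$ of order~$1$ the bound $\|u_\epsilon(t_i,x_j)-u_0(x_j)\|_p\lesssim e^{CHT}$ carries no smallness in $\epsilon$. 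The observation that $e^{CHT}=o(\epsilon^{-\delta})$ for every $\delta>0$ only says this quantity grows slowly; it does not make it tend to zero.

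The paper supplies the missing idea: a recursive Markov/pigeonhole decomposition. One does \emph{not} try to bound $\P(\inf u_\epsilon\le\epsilon)$ directly. Instead one introduces the level-crossing times $T_k=\inf\{t>T_{k-1}:\inf_x u_{\epsilon(k)}(t,x)\le e^{-k}\}$, uses the strong Markov property and comparison to restart at level $e^{-k}$, and rescales $V^{(k)}=e^k U^{(k)}$ so that each step becomes ``start from $1$, drop to $1/e$.'' The H\"older/Kolmogorov estimate is then applied on the \emph{short} interval $[0,T/m]$, producing the small factor $(T/m)^{\beta\eta p/2}$. The pigeonhole observation that $\{T_{2m}\le T\}$ forces at least $m$ of the increments $T_{k+1}-T_k$ to be $\le T/m$ turns this into a product of $m$ small probabilities, and the conditions $A_1<1$, $4A_2<1$ are exactly what make $\frac{\beta\eta pm}{2}\log m$ dominate the competing $H\sim m^{A_1}+p^2 m^{4A_2}$ terms in the exponent. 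This is the mechanism you are missing. A secondary omission: for existence without~\eqref{E:u_0>0}, the paper approximates $u_0$ by $u_0+1/n$ and uses part~(2) and Proposition~\ref{P:IdentifySol}; your proposal only treats the strictly positive case.
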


\begin{remark}\label{R:Comparison}
  In part (2) of Theorem~\ref{T:SubCRT} we requires the additional
  condition~\eqref{E:u_0>0}. This is because the uniqueness result in part (1)
  can only be established under this assumption. However, if one considers
  solutions constructed solely via the localization procedure used in the proof
  of part (1) of Theorem~\ref{T:SubCRT}, then the comparison principle stated in
  part (2) can still be established without imposing condition~\eqref{E:u_0>0}.
\end{remark}

\begin{proof}[Proof of Theorem~\ref{T:SubCRT}]
  The proof consists of three steps. The first two steps below will need the
  additional assumption~\eqref{E:u_0>0}. In Step 3, this assumption will be
  removed. \bigskip

  \noindent\textbf{Step 1.~} In this step, we prove part (1) of
  Theorem~\ref{T:SubCRT} under~\eqref{E:u_0>0}. We will divide this step into
  four sub-steps. \bigskip

  \noindent\textbf{Step 1-1.~} In this sub-step, we identify the solution (as
  given by~\eqref{E:Identify} below) via localization procedure under condition
  that $\inf_{z \in \mathbb{T}}u_0(z) \ge 1$. For $\epsilon < \delta$, define
  the modified $b$ and $\sigma$ as follows:
  \begin{equation}\label{E:Eps}
    b_{\epsilon}(z) \coloneqq
    \begin{dcases}
      \frac{b(\epsilon)}{\epsilon} z & 0 < z \le \epsilon\,, \\
      b(z)                           & z > \epsilon\,,
    \end{dcases}
    \quad \text{and} \quad
    \sigma_{\epsilon}(z) \coloneqq
    \begin{dcases}
      \frac{\sigma(\epsilon)}{\epsilon} z & 0 < z \le \epsilon\,, \\
      \sigma(z)                           & z > \epsilon\,.
    \end{dcases}
  \end{equation}
  Thanks to~\eqref{E:A1A2}, when $\epsilon$ is sufficiently small, the growth
  constants (see~\eqref{E:Lb}) of $b_\epsilon$ and $\sigma_\epsilon$ satisfy
  \begin{equation}\label{E:LipEst}
    L_{b_{\epsilon}}      \le C (\log(1/\epsilon))^{A_1} \qquad\text{and}\qquad
    L_{\sigma_{\epsilon}} \le C (\log(1/\epsilon))^{A_2}.
  \end{equation}
  Now consider the following SHE with the modified $b$ and $\sigma$:
  \begin{equation}\label{E:SHEepsilon}
    \frac{\partial u_\epsilon}{\partial t} = \frac{1}{2} \frac{\partial^2 u_\epsilon}{\partial x^2} + b_\epsilon(u_\epsilon) + \sigma_\epsilon(u_\epsilon)\dot{W}\,.
  \end{equation}
  Let $u_\epsilon$ and $\tilde{u}_\epsilon$ be solutions to~\eqref{E:SHEepsilon}
  with initial conditions $u_\epsilon(0,x) \equiv 1$ and
  $\tilde{u}_\epsilon(0,x) = u_0(x)$, respectively.  Because both $b_\epsilon$
  and $\sigma_\epsilon$ are globally Lipschitz, there exist unique solutions
  $u_\epsilon$ and $\tilde{u}_\epsilon$ to~\eqref{E:SHEepsilon}, starting from
  respective initial conditions. The sample path comparison
  theorem~\ref{T:WeakComp} implies that $u_\epsilon \le \tilde{u}_\epsilon$ a.s.
  Define the following stopping times
  \begin{align*}
    \tau_\epsilon         \coloneqq \inf_{s>0}\left\{\inf_{x\in\mathbb{T}} u_\epsilon(s,x)         \le \epsilon\right\} \quad \text{and} \quad
    \tilde{\tau}_\epsilon \coloneqq \inf_{s>0}\left\{\inf_{x\in\mathbb{T}} \tilde{u}_\epsilon(s,x) \le \epsilon\right\}.
  \end{align*}
  Our goal is to show
  \begin{equation}\label{E:ZeroBlowup}
    \lim_{\epsilon\to 0}\tau_\epsilon=\infty\,,\quad \text{a.s.}
  \end{equation}
  Choose $\epsilon(n)=e^{-n}$ for any positive integer $n$, Define stopping
  times $\{T_k\}_{k\geq0}$ recursively by
  \begin{equation*}
    T_0   \coloneqq 0, \quad  \text{and} \quad
    T_{k} \coloneqq \inf_{0\le s\le t}\left\{s>T_{k-1}, \inf_{x\in \mathbb{T}}u_{\epsilon (k)}(s,x)\le e^{-k}\right\}\quad\text{for $k\ge 1$}\,.
  \end{equation*}
  Fix $T>0$. By monotonicity of $\{T_n\}_{n\ge 1}$, the limit
  in~\eqref{E:ZeroBlowup} is equivalent to
  \begin{equation}
    \lim_{n\to \infty}\P\left\{T_n\le T\right\}
    = \lim_{n\to\infty}\P\left\{\inf_{s\in[0,T]}\inf_{x\in\mathbb{T}}u_{\epsilon(n)}(s,x)\le e^{-n}\right\}
    = 0\,. 
  \end{equation}
  To obtain the above limit, let us first consider the following conditional
  probabilities
  \[
    \P\left\{T_{k+1}-T_{k}\le \frac{2T}{n} \middle| \calF_{T_k}\right\}\,,\quad k \geq 0\,.
  \]
  For all $k$ fixed, by the comparison principle (see Theorem~\ref{T:WeakComp})
  and strong Markov property of the solution $u_{\epsilon(n)}$, we have that
  \begin{align*}
        \P\left\{T_{k+1}-T_{k}\le \frac{2T}{n} \middle| \calF_{T_k}\right\}
    \le \P\left\{\inf_{s\in [0,\frac{2T}{n}]}\inf_{x\in \mathbb{T}}U^{(k)}(s,x)\le e^{-(k+1)}\right\}\,,
  \end{align*}
  where $U^{(k)}(s,x)$ solves stochastic heat equation~\eqref{E:SHEepsilon} with
  initial condition $U^{(k)}(0,x)\equiv e^{-k}$. Denote
  $V^{(k)}(s,x)=e^kU^{(k)}(s,x)$. Thus, $V^{(k)}(0,x)\equiv 1$ and
  $V^{(k)}(s,x)$ solves
  \begin{equation}
    \frac{\partial V^{(k)}}{\partial t} = \frac{1}{2} \frac{\partial^2 V^{(k)}}{\partial x^2} + b_k(V^{(k)}) + \sigma_k(V^{(k)})\xi^{(k)}\,,
  \end{equation}
  where $b_k(u) \coloneqq e^{k}b_\epsilon(e^{-k}u)$ and $\sigma_k(u) \coloneqq
  e^{k}\sigma_\epsilon(e^{-k}u)$ with growth constants satisfying
  \[
    L_{b_k} \le {L_{b_\epsilon}} \quad \text{and} \quad
    L_{\sigma_k} \le L_{\sigma_\epsilon}\:.
  \]
  Here, we have used the strong Markov property and $\xi^{(k)}$ are independent
  copies of space-time white noise. Thus, we further get
  \begin{equation}
    \P\left\{T_{k+1}-T_{k} \le \frac{2T}{n} \middle| \calF_{T_k}\right\} \le \P\left\{\sup_{(s,x)\in [0,\frac{2T}{n}]\times \mathbb{T}} \left|V^{(k+1)}(s,x)-V^{(k+1)}(0,x)\right|\ge 1-\frac{1}{e}\right\}\,.\label{E:ProbIneq}
  \end{equation}
  For the sake of conciseness, we temporally write $\Lip_{b_\epsilon}$ as
  $\Lip_{b}$ and write $\Lip_{\sigma_\epsilon}$ as $\Lip_{\sigma}$. Recall the
  notation in~\eqref{E:H&M}, $H = L_b + p^2 L_{\sigma}^4$. By
  {part (1) of} Proposition~\ref{P:Holder}, together
  with $b_{\epsilon}(0) = \sigma_{\epsilon}(0) = 0$, we see that, for all $\beta
  \in (0, 1/2\wedge \gamma)$ and $T \ge t' \ge t \ge 0$,
  \begin{align*}
    \Norm{V^{(k+1)}(t',x)-V^{(k+1)}(t,x)}_p
    \le & C(t'-t)^{\beta/2}L_bH^{\beta/2-1}e^{CHt'} \\
        & + C\sqrt{p}(t'-t)^{\beta/2}L_\sigma  e^{CHt'} \left(H^{\frac{\beta}{2}-\frac{1}{4}}+H^{\frac{\beta}{2}-\frac{1}{2}}\right) \,.
  \end{align*}
  By our assumption on $b$ and $\sigma$, $H$ is away from zero. Hence, the above
  bound can be simplified as
  \begin{align*}
    \Norm{V^{(k+1)}(t',x)-V^{(k+1)}(t,x)}_p
    \le C(t'-t)^{\beta/2}\left(L_bH^{\beta/2-1} + \sqrt{p} L_\sigma H^{(2\beta-1)/4}\right)e^{CHt'}.
  \end{align*}
  Similarly, for all $x,x'\in \mathbb{T}$ and $t>0$, when $H$ is large enough,
  we have that
  \begin{align*}
    \Norm{V^{(k+1)}(t,x')-V^{(k+1)}(t,x)}_p
    \le C|x'-x|^{\beta}\left(L_bH^{\beta/2-1} + \sqrt{p} L_\sigma H^{(2\beta-1)/4}\right)e^{CHt}.
  \end{align*}
  An application of the Kolmogorov continuity theorem (see, e.g.,
  \cite{dalang.khoshnevisan.ea:09:minicourse} or \cite{kunita:90:stochastic}),
  with the space-time metric $\rho\left((t',x'),(t,x)\right) \coloneqq
  |t'-t|^{\beta/2} + |x'-x|^\beta$, indicates that for any $\tau > 0$ and $0 \le
  \eta<(p-2)/p$, we have
  \begin{align*}
    \sup_k\E \left(\sup_{\substack{x, x'\in \mathbb{T} \\ t, t' \in [0,\tau] \\
    (x,t)\neq(x',t')}}\left|\frac{V^{(k+1)}(t',x')-V^{(k+1)}(t,x)}{\rho\left((t',x'),(t,x)\right)^\eta}\right|^p\right)
    \le \left[C \left(L_bH^{\beta/2-1} + \sqrt{p} L_\sigma H^{(2\beta-1)/4}\right)e^{CH\tau}\right]^p\,,
  \end{align*}
  which implies that
  \begin{align*}
    \sup_k \E \left(\sup_{\substack{x\in \mathbb{T} \\ s \in (0,\tau]}}\left|\frac{V^{(k+1)}(s,x)-V^{(k+1)}(0,x)}{s^{(\beta/2)\eta}}\right|^p\right)
    \le \left[C \left(L_bH^{\beta/2-1} + \sqrt{p} L_\sigma H^{(2\beta-1)/4}\right)e^{CH\tau}\right]^p\,.
  \end{align*}
  Thus we have
  \begin{align*}
    I(\tau) \coloneqq & \sup_k \E \left(\sup_{\substack{x\in \mathbb{T} \\ s \in [0,\tau]}}\left|{V^{(k+1)}(s,x)-V^{(k+1)}(0,x)}\right|^p\right) \\
            \le       & \left[C L_b H^{\frac{\beta}{2}-1}e^{CH\tau}+C\sqrt{p}L_{\sigma}e^{C H\tau}H^{\frac{\beta}{2}-\frac{1}{4}}\right]^p \tau^{\frac{\beta\eta p}{2}}\,.
  \end{align*}
  In the following, we assume $n = 2m$. Let $\tau = \frac{2T}{n} = \frac{T}{m}$,
  we obtain that
  \begin{align*}
    I\left(\frac{T}{m}\right)
    \le C^p \left[ L_b H^{\frac{\beta}{2}-1}+ \sqrt{p}L_{\sigma}H^{\frac{\beta}{2}-\frac{1}{4}}\right]^p e^{CpH\frac{T}{m}} \left(\frac{T}{m}\right)^{\frac{\beta\eta p}{2}}\,.
  \end{align*}
  An application of Chebyshev's inequality gives
  \begin{align*}
    \sup_k\P\left\{\sup_{(s,x)\in [0,\frac{T}{m}]\times \mathbb{T}} \left|V^{(k+1)}(s,x)-V^{(k+1)}(0,x)\right|\ge 1-\frac{1}{e}\right\}
    \le \left(1-1/e\right)^{-p} I\left(\frac{T}{m}\right).
  \end{align*}
  Notice that the case $\left\{T_{2m}<T\right\}$ implies that there are at least
  $m$-many distinct $k\in \{0,\dots,2m-1\}$ such that $\left\{T_{k+1}-T_k\le
  T/m\right\}$. To be more precise, we have
  \begin{align*}
    \P \left\{T_{2m}<T\right\}
    \le & \P\left\{\bigcup_{\{k_i\}_{i=1}^m\subset\{1,2,\dots,2m\}}\bigcap_{i=1}^m \left\{T_{k_i+1}-T_{k_i}\le T/m \right\} \right\} \\
    \le & \sum_{\{k_i\}_{i=1}^m\subset\{1,2,\dots,2m\}} \P\left\{\bigcap_{i=1}^m   \left\{T_{k_i+1}-T_{k_i}\le T/m \right\} \right\} \\
      = & \sum_{\{k_i\}_{i=1}^m\subset\{1,2,\dots,2m\}} \E\left\{\prod_{i=1}^m  \P \left\{T_{k_i+1}-T_{k_i}\le T/m  \middle| \calF_{k_i} \right\}\right\} .
  \end{align*}
  Combined with~\eqref{E:ProbIneq}, we obtain that
  \begin{align*}
    \MoveEqLeft
    \P\left\{T_{2m}\le T\right\}
    \le   \binom{2m}{m} \P\left\{\sup_{(s,x)\in [0,\frac{T}{m}]\times \mathbb{T}} \left|V^{(k+1)}(s,x)-V^{(k)}(0,x)\right|\ge 1-\frac{1}{e}\right\}^{m} \\
    \le & \binom{2m}{m} \exp\left(mp \log \left(\left(C L_b H^{\frac{\beta}{2}-1}+C\sqrt{p}L_{\sigma}H^{\frac{\beta}{2}-\frac14}\right)e^{CHT/m}\right)+\frac{\beta \eta p m}{2}\log \frac{T}{m}\right)\,.
  \end{align*}
  By bounding $L_b H^{-1}$ and $\sqrt{p}L_\sigma H^{-1/4}$ by $1$ in the above
  inequality, we obtain that
  \begin{align*}
    \P\left\{T_{2m}\le T\right\}
    \le & \binom{2m}{m} \exp\left\{mp\log \left(CH^{\frac{\beta}{2}}e^{CH\frac{T}{m}}\right)+\frac{\beta \eta p m}{2} \log \frac{T}{m}\right\} \\
    \le & \binom{2m}{m} \exp\left(mp \left(\log C + \frac{\beta}{2} \log H+CH\frac{T}{m}\right)+\frac{\beta \eta p m}{2}\log\frac{T}{m}\right)  \,.
  \end{align*}
  Apply~\eqref{E:LipEst} with $\epsilon = e^{-{2m}}$ to see that $H \le C
  (2m)^{A_1} + C p^2(2m)^{4A_2}$. Noticing that $\log (a+b)\le \log (2(a\vee
  b))=\log 2+(\log a \vee \log b)$ for $a, b > 0$, we see that
  \begin{align*}
     \log H \le& {\log 2+{2\log C}+ \left( A_1{{\log (2m)}}\vee\left({\log p^2}+4 A_2{{\log (2m)}}\right) \right)}
  \end{align*}
  by Stirling's approximation,
  \[
    \binom{2m}{m} \le \frac{2^{2m+1}}{\sqrt{\pi m}} = \exp \left((2m+1)\log 2- \frac{1}{2} \log (\pi m)\right)\,.
  \]
  Thus, we obtain that
  \begin{equation}\label{E:P_Tm<T}
    \begin{aligned}
    \P \left\{T_m \le T\right\}
    \le & \exp \bigg((2m+1)\log 2- \frac{1}{2} \log (\pi m)+ mp \log C                            \\
        & \quad + \frac{mp\beta}{2}\left({\left(1+A_1+4A_2\right)}\log 2+{2\log C} + (A_1 \log m) \vee (\log p^2 + 4A_2 \log m)\right) \\
        & \quad+ C \left(m^{A_1}+ p^2 m^{4A_2}\right) \frac{T}{m} + \frac{\beta \eta p m}{2}\log T - \frac{\beta \eta p m}{2} \log m\bigg)\,.
    \end{aligned}
  \end{equation}

  By our assumption, $A_1<1$ and $4A_2<1$. We can take $p$ large enough such
  that $(A_1\vee 4A_2)<\eta<1-2/p$. Denote $\lambda \coloneqq
  \left(\eta-(A_1\vee 4A_2)\right)>0$. When $m$ is large enough, we see that the
  expression in the exponent of \eqref{E:P_Tm<T} is dominated by
  \begin{align*}
   -\frac{\beta \lambda p m}{4}\log m\,,
  \end{align*}
  which shows that
  \[
    \lim_{m \to \infty}\P\left\{T_{2m}\le T\right\} = 0\,.
  \]
  By monotonicity of $T_n$, $\P(T_\infty > T) = 1$ for any $T > 0$. Our
  goal~\eqref{E:ZeroBlowup} is proved. By comparison theorem with respect to
  initial value, $\tilde{\tau}_\epsilon\geq\tau_\epsilon$ for any $\epsilon>0$.
  Thus, we have
  \begin{equation}\label{E:Tau-Infity}
    \lim_{\epsilon\to 0}\tilde{\tau}_\epsilon=\lim_{\epsilon\to 0}\tau_\epsilon=\infty\,,
  \end{equation}
  almost surely. Recall that $b$ and $\sigma$ agree with $b_\epsilon$ and
  $\sigma_\epsilon$ for $u>\epsilon$, respectively, we can define coherently
  \begin{align}\label{E_:Identify}
    u(t,x) \coloneqq \tilde{u}_\epsilon(t,x),\quad\text{for all $(t,x)\in [0,\tilde{\tau}_\epsilon]\times\mathbb{T}$.}
  \end{align}
  Finally, from~\eqref{E:Tau-Infity} and~\eqref{E_:Identify}, we are able to
  identify $u$ as the following pathwise limit:
  \begin{align}\label{E:Identify}
    u(t,x) = \lim_{\epsilon \downarrow 0} \tilde{u}_\epsilon(t,x), \quad
    \text{a.s., for all $(t,x)\in [0,\infty)\times\mathbb{T}$}.
  \end{align}
  The almost sure path continuity of $u(t,x)$ is inherited from
  $\tilde{u}_{\epsilon}(t,x)$.

  To prove \eqref{E:Mmt-SubCRT}, we notice that as $\epsilon \to 0$, ${\bf
  1}_{\{t > \tau_{\epsilon}\}}$ decreases to 0 a.s., and we have that 
  \begin{align*}
    \mathbb{E} \left[u(t,x)^p\right] 
    = & \lim_{\epsilon \to 0} \left\{
        \E \left[u(t,x)^p {\bf 1}_{\{t < \tau_{\epsilon}\}}\right] 
      + \E \left[u(t,x)^p {\bf 1}_{\{t > \tau_{\epsilon}\}}\right]
      \right\} \\
      = & \lim_{\epsilon \to 0} \E \left[ \tilde{u}_{\epsilon}(t,x)^p {\bf 1}_{\{t < \tau_{\epsilon}\}} \right]
      <\infty,
  \end{align*}
  uniformly for $x\in \mathbb{T}$ and $t \in [0,T]$. This is because by
  \eqref{E:MomBdU-2} in Proposition \ref{P:MomBdU} the moments of
  $\tilde{u}_{\epsilon}(t,x)$ can be uniformly bounded as long as $\epsilon <
  \delta$. \bigskip

  \noindent\textbf{Step 1-2.~} Now we can demonstrate the strict positivity of
  $u$ identified in~\eqref{E:Identify}. Indeed, for any $T>0$, we have
  \begin{align*}
    \MoveEqLeft \P\left(\inf_{t\in [0,T]}\inf_{x\in\mathbb{T}}u(t,x)>0\right)
    =   \P\left(\bigcup_{n\ge 1}\left\{ \inf_{t\in \left[0,\tau_{1/n}\right]} \inf_{x\in\mathbb{T}}u(t,x)>0\right\} \right)\\
    = & \lim_{n\to \infty}\P\left( \inf_{t\in \left[0,\tau_{1/n}\right]} \inf_{x\in\mathbb{T}}u(t,x)>0 \right)
    =   \lim_{n\to \infty}\P\left( \inf_{t\in \left[0,\tau_{1/n}\right]} \inf_{x\in\mathbb{T}}\tilde{u}_{1/n}(t,x)>0 \right) = 1\,,
  \end{align*}
  where the first equality is due to~\eqref{E:Tau-Infity}. \bigskip

  \noindent\textbf{Step 1-3.~} Now we show that $u$ given in~\eqref{E:Identify}
  solves~\eqref{E:SHE} in the sense of Definition~\ref{D:Solution}. This part is
  covered by Proposition~\ref{P:IdentifySol}. We only need to check the uniform
  convergence of $b_\epsilon$ and $\sigma_\epsilon$ to $b$ and $\sigma$
  respectively as $\epsilon \to 0$. Take $b_\epsilon$ for example. For any
  $\epsilon' > 0$, since $\lim_{z\to0} b(z) = 0$, there exists $\delta > 0$ such
  that $\sup_{z\in [0,\delta]}b(z) \le \epsilon'/4$. For any $\epsilon < \delta$
  and $0 \le z \le \epsilon < \delta$, we have
  \begin{align*}
    \left|b_\epsilon(z)-b(z)\right|=\left|b(\epsilon)\frac{z}{\epsilon}-b(z)\right|
    \le \left|b(\epsilon)\right|\left|\frac{z}{\epsilon}-1\right|+\left|b(\epsilon)-b(z)\right|
    \le 4 \times \frac{\epsilon'}{4}\,,
  \end{align*}
   and $b_\epsilon(z)=b(z)$ for $z>\epsilon$. The uniform convergence of
   $b_\epsilon\to b$ follows. The proof for $\sigma_\epsilon$ is the same.
   \bigskip

  \noindent\textbf{Step 1-4.~} Now we prove the uniqueness of the solution $u$
  given in~\eqref{E:Identify}. Let $v\in
  C\left(\left[0,\infty\right)\times\mathbb{T};\R\right)$ be another strict
  positive solution to~\eqref{E:SHE}. Define stopping times as
  \begin{align*}
    \tilde{\tau}_\epsilon'\coloneqq\inf_{t> 0}\left\{\inf_{x\in\mathbb{T}}\left| v(t,x) \right |\le \epsilon\right\}\,.
  \end{align*}
  By the strict positivity of $v$, we have
  \begin{align}\label{E_:v positive}
      \lim_{\epsilon\to 0}\tilde{\tau}_\epsilon'=\infty\,.
  \end{align}
  On $[0, \tilde{\tau}_\epsilon \wedge \tilde{\tau}_\epsilon']$, both $u$ and
  $v$ solves~\eqref{E:SHE} with $b=b_\epsilon$ and $\sigma=\sigma_\epsilon$
  which are globally Lipschitz. The general uniqueness criteria shows
  \begin{align*}
      u(t,x)\mathds{1}_{\left\{t\le \tilde{\tau}_\epsilon\wedge \tilde{\tau}_\epsilon'\right\}}=v(t,x)\mathds{1}_{\left\{t\le \tilde{\tau}_\epsilon\wedge \tilde{\tau}_\epsilon'\right\}}\quad\text{for all}\quad(t,x)\in[0,\infty)\times \mathbb{T}\,.
  \end{align*}
  By taking $\epsilon\to 0$ and combining with~\eqref{E:Tau-Infity}
  and~\eqref{E_:v positive}, the uniqueness of solution to~\eqref{E:SHE} is
  established. \bigskip

  \noindent\textbf{Step 2.~} In this step, we prove part (2) of
  Theorem~\ref{T:SubCRT}. Recall that condition~\eqref{E:u_0>0} holds for both
  $u_{1,0}$ and $u_{2,0}$. Let both $b_\epsilon$ and $\sigma_\epsilon$ be
  defined as in~\eqref{E:Eps} with $\epsilon < \delta < 1$. Define
  \begin{align*}
    \tau_{1,\epsilon} \coloneqq \inf_{s>0}\left\{\inf_{x\in\mathbb{T}} u_{1,\epsilon}(s,x) \le \epsilon\right\} \quad\text{and}\quad
    \tau_{2,\epsilon} \coloneqq \inf_{s>0}\left\{\inf_{x\in\mathbb{T}} u_{2,\epsilon}(s,x) \le \epsilon\right\}\,.
  \end{align*}
  Define $\tau_\epsilon \coloneqq \tau_{1,\epsilon}\wedge\tau_{2,\epsilon}$. By
  the argument in Step~1,
  \begin{gather*}
    u_1(t,x)\mathds{1}_{\{t<\tau_\epsilon\}} = u_{1,\epsilon}(t,x)\mathds{1}_{\{t<\tau_\epsilon\}} \quad \text{and} \quad
    u_2(t,x)\mathds{1}_{\{t<\tau_\epsilon\}} = u_{2,\epsilon}(t,x)\mathds{1}_{\{t<\tau_\epsilon\}}\,.
  \end{gather*}
  Thanks to the comparison principle (see Theorem~\ref{T:WeakComp}), for all
  $\epsilon \in (0, \delta)$, with probability one, $u_{1,\epsilon}(t,x)\le
  u_{2,\epsilon}(t,x)$, for all $(t,x)\in [0,T]\times \mathbb{T}$. Therefore,
  almost surely,
  \begin{equation}\label{E_:UxInd}
    u_1(t,x)\mathds{1}_{\{t<\tau_\epsilon\}}\le u_2(t,x)\mathds{1}_{\{t<\tau_\epsilon\}}\,.
  \end{equation}
  Recall in Step~1 (see~\eqref{E:Tau-Infity}) we have
  \begin{align}\label{E:IndTau}
    \lim_{\epsilon\to 0}\mathds{1}_{\{t<\tau_\epsilon\}} = 1,  \quad \text{a.s.}
  \end{align}
  Finally, taking the limit of $\epsilon \downarrow 0$ on the both sides
  of~\eqref{E_:UxInd} completes the proof of part (2) of Theorem~\ref{T:SubCRT}.
  \bigskip

  \noindent\textbf{Step 3.~} In this step, we prove part (1) of
  Theorem~\ref{T:SubCRT} without the additional assumption~\eqref{E:u_0>0}. For
  each $n\in \mathbb{N}$, define the initial values $u_{0,n}=u_0+\frac{1}{n}$,
  which are bounded away from zero. From Step~1, there exist $u_n$
  solving~\eqref{E:SHE} with the same $b$, $\sigma$ and initial values
  $u_{0,n}$. By Step~2, we can see that $u_n$ is decreasing with respect to $n$.
  Define
  \begin{align}\label{E:u_n->u}
    u(t,x) \coloneqq \lim_{n\to\infty}u_{n}(t,x),\quad \text{a.s.}\,,\forall (t,x)\in [0,T]\times \mathbb{T}\,.
  \end{align}
  Now we can apply Proposition~\ref{P:IdentifySol} to conclude the proof of
  Theorem~\ref{T:SubCRT}.
\end{proof}

By the properties of pointwise limit~\eqref{E:IndTau}, we see that the solution
constructed via localization procedure satisfies the comparison principle as
pointed out in Remark~\ref{R:Comparison}.

\section{Two extensions}\label{S:Extensions}

In this section, we extend the results of Theorem~\ref{T:SubCRT} to two cases:
(1) the drift term $b$ is allowed to be critical, i.e., $A_1=1$ in
\eqref{E:A1A2}; and (2) both $b$ and $\sigma$ are allowed to have superlinear
growth at infinity. The first extension is covered in Theorem~\ref{T:Critical},
while the second one is covered in Theorem~\ref{T:Superlinear}.

\subsection{The limiting case for the drift term $b$} \label{SS:Limiting-b}

In Theorem~\ref{T:SubCRT}, we established the existence of a global solution
when $b(z)$ and $\sigma(z)$ satisfy condition~\eqref{E:A1A2} for positive $z$
near zero, with $A_1<1$ and $A_2<1/4$. In this subsection, we address the
limiting case $A_1 \uparrow 1$, which requires one additional assumption,
stated in~\eqref{E:Liminf} below. {A typical example is
$b(z) = - z \log(1/z)$.} The solution is constructed as a limit of a sequence
$\{ u_{(\alpha)} \}_{0<\alpha<1}$. We continue to use the notation introduced
in the proof of Theorem~\ref{T:SubCRT}.

\begin{theorem}\label{T:Critical}
  Let $u_0\ge 0$ be bounded above and H\"{o}lder continuous with exponent
  $\gamma \in (0, 1)$. Assume that for some $\delta \in (0, 1)$ and some $A_2
  \in (0, 1/4)$, $b$ and $\sigma$ satisfy~\eqref{E:A1A2} with $A_1 = 1$ for all
  $z\in [0,\delta]$, and both $b$ and $\sigma$ are Lipschitz on $[\delta,
  \infty)$. Additionally, assume that for $z\in (0,\delta]$, 
  \begin{align}\label{E:Liminf}
    \frac{|b(\delta)|}{\delta} \geq \frac{|b(z)|}{z}
  \end{align}
  and $b(z)$ does not change sign. Then there exists a global solution to the
  SHE~\eqref{E:SHE}, which is nonnegative and satisfies~\eqref{E:MomBdU-2}.
  Thus,
  \begin{align}\label{E:Mmt-CRT}
    \sup_{t\in [0,T]}\sup_{x\in \mathbb{T}}\E\left[ u(t,x)^p\right]<\infty\,, \quad \text{for all $p\ge 2$ and $T>0$}\,.
  \end{align}
\end{theorem}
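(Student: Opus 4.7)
The plan is to construct the critical solution as a monotone limit of solutions to sub-critical approximating SHEs, reducing matters to an application of Theorem~\ref{T:SubCRT}. For each $\alpha \in (0, 1)$, I will introduce a drift $b_{(\alpha)} : [0, \infty) \to \R$ satisfying (i) $b_{(\alpha)} \equiv b$ on $[\delta, \infty)$; (ii) $|b_{(\alpha)}(z)/z| = O((\log(1/z))^{\alpha})$ as $z \to 0^+$; (iii) $\alpha \mapsto b_{(\alpha)}$ is monotone; and (iv) $b_{(\alpha)} \to b$ uniformly on $[0, \infty)$ as $\alpha \uparrow 1$. A natural candidate is
\[
  b_{(\alpha)}(z) \coloneqq \sgn(b)\, \min\!\left( |b(z)|,\; c_\alpha\, z (\log(1/z))^{\alpha} \right), \quad z \in (0, \delta],
\]
extended by $b$ on $[\delta, \infty)$, with $c_\alpha$ chosen so that $b_{(\alpha)}(\delta) = b(\delta)$. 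The non-sign-change of $b$ together with~\eqref{E:Liminf} are precisely what make this family well-defined, monotone in $\alpha$, and uniformly convergent to $b$.

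Each SHE with coefficients $(b_{(\alpha)}, \sigma)$ then satisfies the hypotheses of Theorem~\ref{T:SubCRT}(1), yielding a nonnegative global mild solution $u_{(\alpha)}$. Because $b_{(\alpha)} \equiv b$ on $[\delta, \infty)$, we have $L_{b_{(\alpha)}, \delta} = L_{b, \delta}$ and $C_{b_{(\alpha)}, \delta} \le C_{b, \delta}$ uniformly in $\alpha$, so Proposition~\ref{P:MomBdU}(2) supplies the crucial uniform moment bound
\[
  \sup_{\alpha \in (0,1)}\, \sup_{t \in [0, T]}\, \sup_{x \in \mathbb{T}} \E\!\left[ u_{(\alpha)}(t, x)^p \right] < \infty, \quad p \ge 2,\; T > 0,
\]
with constants independent of $\alpha$ and matching those appearing in~\eqref{E:MomBdU-2}.

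Applying the comparison principle in Theorem~\ref{T:SubCRT}(2) between pairs $(b_{(\alpha_1)}, b_{(\alpha_2)})$ transfers the monotonicity of $\{b_{(\alpha)}\}$ to $\{u_{(\alpha)}\}$. Hence $u(t, x) \coloneqq \lim_{\alpha \uparrow 1} u_{(\alpha)}(t, x)$ exists almost surely, and~\eqref{E:Mmt-CRT} (as well as~\eqref{E:MomBdU-2} for $u$) follows by Fatou's lemma. Identification of $u$ as a mild solution to~\eqref{E:SHE} is then a direct application of Proposition~\ref{P:IdentifySol}, taken along a sequence $\alpha_n \uparrow 1$, with $\sigma_n \equiv \sigma$, $u_{0,n} \equiv u_0$, and the uniform Lipschitz bound just noted. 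The case in which $u_0$ may vanish is absorbed, as in Step~3 of the proof of Theorem~\ref{T:SubCRT}, by first replacing $u_0$ with $u_0 + 1/k$ and then letting $k \to \infty$.

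The main obstacle is the construction of $\{b_{(\alpha)}\}$ satisfying properties (i)--(iv) simultaneously; in particular, securing genuine monotonicity in $\alpha$ (so that Theorem~\ref{T:SubCRT}(2) applies and the limit as $\alpha\uparrow 1$ exists pathwise) requires careful use of both the sign condition on $b$ near zero and~\eqref{E:Liminf}, in order to control the transition between the regime where $|b|$ dominates and the regime where the sub-critical comparator dominates in the definition of $b_{(\alpha)}$. Once the approximating family is in hand, the remainder is a clean assembly of tools already developed in Sections~\ref{S:Preliminary}--\ref{S:MainResults}.
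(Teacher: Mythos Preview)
Your plan---approximate $b$ by sub-critical drifts $b_{(\alpha)}$, apply Theorem~\ref{T:SubCRT} to each, use comparison to get a monotone family $u_{(\alpha)}$, pass to the limit, and identify it via Proposition~\ref{P:IdentifySol}---is exactly the paper's approach. The only substantive difference is the choice of approximating family, and here your candidate is not yet nailed down. The matching condition $b_{(\alpha)}(\delta)=b(\delta)$ only forces $c_\alpha\ge|b(\delta)|\big/\!\bigl(\delta(\log(1/\delta))^\alpha\bigr)$; with the minimal such $c_\alpha$ the comparator tends, as $\alpha\to 1$, to $\tfrac{|b(\delta)|}{\delta\log(1/\delta)}\,z\log(1/z)$, which need not dominate $|b(z)|$ on $(0,\delta]$ (the hypothesis only gives $|b(z)|\le Cz\log(1/z)$ for a possibly larger $C$), so in general $b_{(\alpha)}\not\to b$. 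Taking instead $c_\alpha=K(\log(1/\delta))^{-\alpha}$ with $K\ge C\log(1/\delta)\vee|b(\delta)|/\delta$ does repair this---monotonicity in $\alpha$ and uniform convergence then follow---but this is precisely the ``main obstacle'' you flagged and left open.

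The paper sidesteps the balancing act with a neater family: for $z\in(0,\delta]$ set
\[
b_{(\alpha)}(z)\coloneqq\theta_b\,z\Bigl(\frac{|b(z)|}{z}\Bigr)^{\!\alpha}\Bigl(\frac{|b(\delta)|}{\delta}\Bigr)^{\!1-\alpha},
\]
a geometric interpolation between $b$ and the linear map $z\mapsto\theta_b\,\tfrac{|b(\delta)|}{\delta}z$. Condition~\eqref{E:Liminf} puts the ratio $\bigl(|b(z)|/z\bigr)\big/\bigl(|b(\delta)|/\delta\bigr)$ on a fixed side of $1$ for all $z\in(0,\delta]$, which makes monotonicity in $\alpha$ automatic; the sub-critical growth $|b_{(\alpha)}(z)|/z=O\bigl((\log(1/z))^\alpha\bigr)$ is immediate from $|b(z)|/z\le C\log(1/z)$; and uniform convergence to $b$ as $\alpha\to 1$ is a short direct estimate. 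The paper also handles general $u_0\ge 0$ in one pass by invoking the comparison principle for localization-constructed solutions (Remark~\ref{R:Comparison}) rather than Theorem~\ref{T:SubCRT}(2), avoiding your extra $u_0+1/k$ layer---though your route works as well.
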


\begin{proof}[Proof of Theorem~\ref{T:Critical}]
  Since our primary interest lies in the existence of solutions, we focus
  exclusively on those constructed via the localization and limiting procedures
  outlined in the proof of Theorem~\ref{T:SubCRT}. As noted in
  Remark~\ref{R:Comparison}, the comparison principle remains valid even without
  the additional condition~\eqref{E:u_0>0}. The proof below is divided into two
  steps. \bigskip

  \noindent\textbf{Step 1.~} In this step, we identify a nonnegative solution
  $u(t,x)$ to~\eqref{E:SHE} as the limit of solutions in the case when $0 < A_1
  < 1$; see~\eqref{E:u_alpha} below. Since for $z\in [0,\delta]$, the sign of
  $b(z)$ remains the same. We use $\theta_b$ to denote its sign. Now for each
  $\alpha\in (0,1)$, define
  \begin{align}\label{E:balpha}
    b_{(\alpha)}(z)=\begin{dcases}
      \theta_b \times z \left(\frac{|b(z)|}{z}\right)^{\alpha} \times \left(\frac{|b(\delta)|}{\delta}\right)^{1-\alpha}\,, & 0 < z \le \delta\,, \\
      b(z)\,,                                                                                                                                           & z > \delta\,.
    \end{dcases}
  \end{align}
  Consider~\eqref{E:SHE} with $b$ replaced by $b_{(\alpha)}$. Since $u_0\ge 0$
  is bounded above and is H\"older continuous, we can apply
  Theorem~\ref{T:SubCRT} with $A_1 = \alpha$ to see that there exists a unique
  global solution, which is denoted by $u_{(\alpha)}$. By the comparison
  property (see Remark~\ref{R:Comparison}), $u_{(\alpha)}$ is a monotone
  non-decreasing (resp. non-increasing) sequence with respect to $\alpha$ when
  $\theta_b=1$ (resp. $\theta_b=-1$). Therefore, the following monotone limit is
  well defined
  \begin{equation}\label{E:u_alpha}
    u(t,x) \coloneqq \lim_{\alpha\to 1}u_{(\alpha)}(t,x),\quad \text{a.s. for all $(t,x)\in [0,T]\times \mathbb{T}$}\,.
  \end{equation}
  Since $u_{(\alpha)}$ is nonnegative, $u$ is also nonnegative. \bigskip

  \noindent\textbf{Step 2.~} In this step, we apply
  Proposition~\ref{P:IdentifySol} to show that the process $u$ constructed
  in~\eqref{E:u_alpha} solves~\eqref{E:SHE}. From the proof
  of~\eqref{E:Mmt-SubCRT} above (see the end of step 1-1 in the proof of
  Theorem~\ref{T:SubCRT}), we see that the moments of $u_{\alpha}(t,x)$ can be
  uniformly bounded for $0 < \alpha < 1$, and thus \eqref{E_:UniLp} is
  satisfied (with $n\geq 1$ replaced by $0 < \alpha < 1$).Thus it suffices to
  show that condition~\eqref{E:b_n} of Proposition~\ref{P:IdentifySol} is
  satisfied, namely,
  \begin{align}\label{E:balpha-uni}
    b_{(\alpha)}(z) \to b(z) \quad \text{uniformly for all $z\ge 0$.}
  \end{align}
  Indeed, from~\eqref{E:balpha}, we only need to consider the case $0 < z \le
  \delta$. We will also restrict attention to the case that $\theta_b = 1$; the
  case that $\theta_b=-1$ can be treated similarly. Under these assumptions, we
  have
  \begin{align*}
    \left|b_{(\alpha)}(z) - b(z)\right|
    =   & |z| \left|\left(\frac{b(z)}{z}\right)^{\alpha}\left(\frac{b(\delta)}{\delta}\right)^{1-\alpha}- \frac{b(z)}{z}\right| \\
    \le & |z|\left|\left(\frac{b(z)}{z}\right)^{\alpha} - \frac{b(z)}{z}\right| + |z|\left|\left(\frac{b(z)}{z}\right)^{\alpha}\left(\frac{b(\delta)}{\delta}\right)^{1-\alpha}-\left(\frac{b(z)}{z}\right)^{\alpha}\right| \\
    \eqqcolon & D_1(\alpha,z)   + D_2(\alpha,z)\,.
  \end{align*}
  For $D_1$, given any $\epsilon > 0$, since $b(z)/z \le{C} |\log z|$, it is
  easy to see that there exists $z_0 \in (0, \delta)$ such that uniformly for
  all $z \in (0, z_0)$ and $\alpha \in (0, 1)$,
  \[
    |z|\left|\left(\frac{b(z)}{z}\right)^{\alpha} - \frac{b(z)}{z}\right| \le |z|\left|\left(\frac{b(z)}{z}\right)^{\alpha} + \frac{b(z)}{z}\right|< \frac{\epsilon}{2}\,.
  \]
  For this $z_0$, $\frac{b(z)}{z}$ is bounded above and away from zero on
  $[z_0,\delta]$ and thus there exists $\alpha_0>0$ such that for all $\alpha
  \in (\alpha_0, 1)$,
  \[
    |z|\left|\left(\frac{b(z)}{z}\right)^{\alpha} - \frac{b(z)}{z}\right| < \frac{\epsilon}{2}\,.
  \]
  Hence, we can conclude that $D_1(\alpha, z)$ converges to zero uniformly for
  $z \in (0,\delta]$ as $\alpha \to 1$. For $D_2$, it suffices to notice that
  $|z| \left(\frac{b(z)}{z}\right)^{\alpha}$ is uniformly bounded for $\alpha
  \in (0,1)$ and $z \in (0,\delta]$ and
  $\left(\frac{b(\delta)}{\delta}\right)^{1-\alpha}-1$ goes to 0 as $\alpha \to
  1$. This completes the proof of Theorem~\ref{T:Critical}.
\end{proof}

\subsection{Superlinearity at infinity}\label{SS:Superlin-Infty}

In this subsection, we establish the existence of the solution when $b(x) =
O(x\log x)$ as $x$ approaches both zero and infinity. Our approach follows the
method of~\cite{dalang.khoshnevisan.ea:19:global}, with certain modifications to
deal with the blowup at zero. The key observation is that the blowup of
Lipschitz coefficients for $b$ and $\sigma$ at zero does not affect the moment
growth of the solution.

\begin{theorem}\label{T:Superlinear}
  Let $u_0> 0$ be bounded above and H\"{o}lder continuous with exponent $\gamma
  \in (0, 1)$. Assume that $b$ and $\sigma$ are locally Lipschitz away from zero
  and infinity and the following conditions hold:
  \begin{align}\label{E:SuperLinear-b}
    \left|\frac{b(z)}{z}\right|=\begin{cases}
      O\left(\left(\log (1/z) \right)^{A_1}\right)\,,\quad \text{as}\  z\to 0\,,\\
      O\left(\log z\right)\,,\quad \text{as}\  z\to \infty\,,
  \end{cases}
  \end{align}
  and
  \begin{align}\label{E:SuperLinear-s}
    \left|\frac{\sigma(z)}{z}\right|=\begin{cases}
        O\left(\left(\log (1/z)\right)^{A_2}\right)\,,\quad \text{as}\ z\to 0\,,\\
        O\left(\left(\log z\right)^{\frac{1}{4}}\right)\,,\quad \text{as}\  z\to \infty\,,
    \end{cases}
  \end{align}
  for some $A_1 \in (0,1)$ and $A_2\in(0,1/4)$. Then there exists a unique
  global solution to the stochastic heat equation~\eqref{E:SHE}. 
\end{theorem}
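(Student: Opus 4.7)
The plan is to layer the near-zero localization procedure of Theorem~\ref{T:SubCRT} on top of the near-infinity truncation scheme of Dalang--Khoshnevisan--Zhang~\cite{dalang.khoshnevisan.ea:19:global}. The key observation, emphasized just before the statement, is that the blow-up of Lipschitz constants at zero affects only the additive constants $C_{b,\delta}$ and $C_{\sigma,\delta}$ in Proposition~\ref{P:MomBdU}(2), not the exponential growth rate in $p$ or $t$; hence the two mechanisms can be combined without conflict.

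First I would truncate at infinity. For each $N > \delta$, define $b^{(N)}$ and $\sigma^{(N)}$ to agree with $b$, $\sigma$ on $[0,N]$ and to be extended so as to be globally Lipschitz on $[N,\infty)$ (for instance by linear extrapolation with slope equal to the local Lipschitz constant at $z=N$). These coefficients retain the log-type behavior~\eqref{E:A1A2} near zero and are globally Lipschitz on $[\delta,\infty)$, so Theorem~\ref{T:SubCRT} produces a unique strictly positive global solution $u^{(N)}$. Set
\begin{equation*}
  \sigma_N \coloneqq \inf\left\{ t > 0 : \sup_{x \in \mathbb{T}} u^{(N)}(t,x) \ge N \right\}.
\end{equation*}
By uniqueness of the $N$-th equation on $[0, \sigma_N \wedge \sigma_{N'}]$, the family $\{u^{(N)}\}$ is pathwise consistent, and we may coherently define $u(t,x) \coloneqq u^{(N)}(t,x)$ on $\{t \le \sigma_N\}$. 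The theorem then reduces to proving $\sigma_N \uparrow \infty$ almost surely, together with the identification/uniqueness steps that already appear in Steps 1-3 and 1-4 of the proof of Theorem~\ref{T:SubCRT}.

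The main step is a uniform-in-$N$ moment bound for $u^{(N)}$ that is sharp enough to overcome the $N^p$ factor in Chebyshev's inequality. The growth rates $|b(z)/z| = O(\log z)$ and $|\sigma(z)/z| = O((\log z)^{1/4})$ at infinity translate into $L_{b^{(N)},\delta} \lesssim \log N$ and $L_{\sigma^{(N)},\delta}^{4} \lesssim \log N$. Following the DKZ argument, one obtains
\begin{equation*}
  \sup_{(t,x) \in [0,T] \times \mathbb{T}} \E\left( |u^{(N)}(t,x)|^p \right) \le M^{p} \exp\left( C p T \log N + C p^{3} T \log N \right),
\end{equation*}
which, combined with Kolmogorov's continuity theorem and the Hölder estimates of Proposition~\ref{P:Holder} (to pass from pointwise to supremum bounds), yields $\P(\sigma_N \le T) \to 0$ after tuning $p$ as a function of $N$ and $T$. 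Once $\sigma_N \uparrow \infty$ is established, the limit $u(t,x)$ is a strictly positive global mild solution of~\eqref{E:SHE} by Proposition~\ref{P:IdentifySol}, and uniqueness follows by comparing any two candidate solutions on $[0, \sigma_N \wedge \sigma_N']$ via the Lipschitz uniqueness for the truncated equation and then letting $N\to\infty$.

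The hardest part will be extracting the correct $p$-dependence in the moment bound. A direct application of Proposition~\ref{P:MomBdU}(2) gives an exponent scaling as $p^{3} T \log N$, which defeats $N^p$ in Chebyshev only when $p^2 \lesssim 1/T$---too restrictive for large $T$. The improvement relies on the precise calibration between the $\log z$ growth of $b$ and the $(\log z)^{1/4}$ growth of $\sigma$ at infinity: a refined iteration of Burkholder--Davis--Gundy with a weight tailored to these rates, as in~\cite{dalang.khoshnevisan.ea:19:global}, produces an effective $p$-rate that, after optimization in $p$, dominates the $N^p$ denominator for arbitrary $T > 0$. This is the single ingredient borrowed wholesale from DKZ; the rest of the proof is structurally identical to Theorem~\ref{T:SubCRT}, with the roles of ``small $\epsilon$'' and ``large $N$'' played symmetrically.
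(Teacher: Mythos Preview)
Your overall architecture---truncate at level $N$ (the paper uses $M$), invoke Theorem~\ref{T:SubCRT} for each truncated problem, and show that the hitting times of level $N$ diverge---is exactly the paper's. The consistency and identification steps are also as you describe. The difference lies in how the ``hardest part'' is resolved.

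You correctly diagnose that a direct use of Proposition~\ref{P:MomBdU}(2) gives an exponent of order $p^3 T\log N$, which beats $N^p$ in Chebyshev only when $T\lesssim p^{-2}$. But your proposed fix---a refined BDG iteration with $p$ optimized as a function of $N$---is not what the paper (or DKZ) does, and as stated it is unclear that it works: the obstruction is that $p$ must be \emph{small}, not large, so ``tuning $p$'' cannot help without a genuinely sharper moment inequality, which you do not supply. The paper instead accepts the $p^3$ exponent, fixes $p$ once and for all, and works only on a short time window $[0,t_0]$ with $t_0=\tfrac12\min(C^{-1},1)$ chosen so that $Ct_0-1<0$; on that window Chebyshev gives $\P(\tau_M^{(1)}\le t_0)\le A^p(B+\log M)^pM^{p(Ct_0-1)}\to 0$. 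One then uses the strong Markov property of $u^{(M)}$ to restart at time $t_0$ and iterate, obtaining $\tau_\infty^{(k)}\ge kt_0$ for every $k$, hence $\tau_\infty=\infty$. This restart trick completely sidesteps the $p$-versus-$T$ tension you identified and is the actual content of the DKZ argument being invoked.
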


\begin{proof}
  By~\eqref{E:SuperLinear-b} and \eqref{E:SuperLinear-s}, there exists
  $0<\delta<1$ such that both $b$ and $\sigma$ are bounded on $[0,\delta]$ and
  Lipschitz on $[\delta,M]$ for all $M>1$. Define $b^{(M)}(x) \coloneqq
  b(x\wedge M)$ and $\sigma^{(M)} \coloneqq \sigma(x\wedge M)$ for $M > 1$. Let
  $C_b, C_{\sigma}, L_{b^{(M)},\: \delta}, L_{\sigma^{(M)},\: \delta}$ be as
  defined in Proposition \ref{P:MomBdU}, and we can bound $L_{b^{(M)},\:
  \delta}$ by $O(\log M)\vee 1$ and $L_{\sigma^{(M)},\: \delta}$ by
  $O\left(\left(\log M\right)^{\frac{1}{4}}\right)\vee 1$.
  Following~\eqref{E:H&M}, set
  \begin{equation}\label{E:h}
    H(M) \coloneqq L_{b^{(M)},\: \delta}+p^2L_{\sigma^{(M)},\: \delta}^4
    \quad \text{for all $M>0$.}
  \end{equation}
  Then we can apply Theorem~\ref{T:SubCRT} to get the solution, denoted by
  $u^{(M)}$, to~\eqref{E:SHE} with $b^{(M)}$ and $\sigma^{(M)}$, with the
  following moment bounds:
  \begin{align*}
    \sup_{t\in[0,T]}\sup_{x\in \mathbb{T}} \E \left(\left|u^{(M)}(t,x)\right|^p\right)
    \le \left(\|u_0\|_{\infty} + \frac{C_b}{4 L_{b, \delta}} + \frac{C_\sigma}{4L_{\sigma, \delta}}\right)^p e^{CpH(M)T}
    < \infty \,,
  \end{align*}
  for all $p\ge 2$, $M>1$, and $T>0$. By Lemma~\ref{L:LinearGrowth},
  \begin{equation}
    \Norm{b^{(M)}(u^{(M)}(t,x))}_p \le C_b+L_{b^{(M)}, \delta}\: \Norm{u^{(M)}(t,x)}_p
  \end{equation}
  and
  \begin{equation}
    \Norm{\sigma^{(M)}(u^{(M)}(t,x))}_p\le C_\sigma+L_{\sigma^{(M)},\: \delta}\: \Norm{u^{(M)}(t,x)}_p\,.
  \end{equation}
  By part (2) of Proposition \ref{P:Holder}, we have for all $x, x' \in
  \mathbb{T}$, $0 \le t < t' \le {t+1}$ and $p \ge 2$,
  \begin{align*}
     \MoveEqLeft \Norm{u^{(M)}(t,x) - u^{(M)}(t,x')}_p \\
     \le & C_\beta|x-x'|^{\beta} \Bigg(|u_0|_{\gamma}+C_{b^{(M)}} t^{1-\beta/2}+\sqrt{p}C_{\sigma^{(M)}}\left(t^{1/4-\beta/2}+t^{1/2-\beta/2}\right) \\
         & + \left(L_{b^{(M)},\:\delta}+\sqrt{p}L_{\sigma^{(M)},\:\delta}\right)\left(\Norm{u_0}_\infty+C_{b^{(M)}}+C_{\sigma^{(M)}}\right)e^{CH(M)t'}\Bigg)
  \end{align*}
  and
  \begin{align*}
    \MoveEqLeft \Norm{u^{(M)}(t,x) - u^{(M)}(t',x)}_p \\
    \le & C_{\beta}|t-t'|^{\beta/2} \Bigg(|u_0|_{\gamma}+C_{b^{(M)}}\left(1+t^{1-\beta/2}\right)+\sqrt{p}C_{\sigma^{(M)}}\left(1+t^{1/4-\beta/2}+t^{1/2-\beta/2}\right) \\
        & + \left(L_{b^{(M)},\:\delta}+\sqrt{p}L_{\sigma^{(M)},\:\delta}\right)\left(\Norm{u_0}_\infty+C_{b^{(M)}}+C_{\sigma^{(M)}}\right)e^{CH(M)t'}\Bigg)\,.
  \end{align*}
  Thus an application of Kolmogorov continuity theorem indicates for any
  $\epsilon>0$ and $0\le \eta<(p-2)/p$, 
  \begin{multline*}
    \E\left(\sup_{t\in [0,\epsilon]}
    \sup_{x\in\mathbb{T}}\left|u^{(M)}(t,x)-u^{(M)}(0,x)\right|^p\right) \\
    \le C^p\left(1\vee \epsilon\right)^{\frac{\beta\eta
    p}{2}}\Bigg(|u_0|_\gamma+C_b+\sqrt{p}C_\sigma \left(\log
  M+\sqrt{p}\left(\log M\right)^{\frac{1}{4}}\right)e^{C\left(\log M+p^2 \log
M\right) \epsilon}\Bigg)^p\,. 
  \end{multline*}
  Take $M$ sufficiently large and for all $p\ge 2$ fixed,
  \begin{align*}
    \E\left(\sup_{t\in [0,\epsilon]} \sup_{x\in\mathbb{T}}\left|u^{(M)}(t,x)\right|^p\right)
    \le A^p(\epsilon\vee 1)^{\frac{\beta\eta p}{2}}(B+\log M)^pe^{Cp\epsilon\log M}\,,
  \end{align*}
  where $A,B,C$ depend on $p$. Define
  \begin{align*}
    \tau_M^{(1)} \coloneqq \inf\left\{t>0:\: \sup_{x\in\mathbb{T}}\left|u^{(M)}(t,x)\right|>M\right\}\,.
  \end{align*}
  By Chebyshev's inequality,
  \begin{align*}
    \P\left\{\tau_M^{(1)} \le \epsilon\right\} 
    \le \P\left\{\sup_{t\in [0,\epsilon]}\sup_{x\in\mathbb{T}} u^{(M)}(t,x)\geq M\right\} 
    \le A^p(\epsilon\vee 1)^{\frac{\beta\eta p}{2}}(B+\log M)^pM^{p\left(C\epsilon-1\right)} \,.
  \end{align*}
  If we choose $\epsilon = \frac{1}{2}\min\{C^{-1},1\} \eqqcolon t_0$, the right
  hand side converges to 0 as $M \to \infty$. Thus
  \begin{align*}
    \tau_{\infty}^{(1)} \coloneqq \lim_{M\to \infty}\tau_{M}^{(1)}>t_0\,,\quad \text{a.s.}
  \end{align*}
  Now define
  \begin{align*}
    \tau^{(2)}_M \coloneqq \inf\left\{t>t_0 : \sup_{x\in\mathbb{T}}\left|u^{(M)}(t,x)\right|>M\right\}\,.
  \end{align*}
  With a standard argument, we can show that $u^{(M)}$ is a strong Markov
  process. By restarting the equation at $t_0$, we see that $\tau^{(2)}_M-t_0
  \ge t_0$ and thus $\tau_{\infty}^{(2)} \coloneqq \lim_{M\to \infty}
  \tau^{(2)}_M \ge 2t_0$ almost surely. With an induction procedure, we have
  \begin{align*}
    \tau_{\infty}^{(k)} \coloneqq \lim_{M\to \infty} \tau^{(k)}_M \ge kt_0\quad\text{a.s.}
  \end{align*}
  where
  \begin{align*}
    \tau^{(k)}_M \coloneqq \inf\left\{t>(k-1)t_0:\sup_{x\in\mathbb{T}}\left|u^{(M)}(t,x)\right|>M\right\}\,.
  \end{align*}
  Thus, the solution stays finite for all $t< kt_0$ for all $k>0$, which implies
  $\tau_\infty^{(1)}=\tau_\infty^{(k)}=\infty$. This completes the proof of
  Theorem~\ref{T:Superlinear}.
\end{proof}

\begin{remark}
  If we assume that $\left|\frac{b(z)}{z}\right| = O(\log z)$ and
  $\left|\frac{\sigma(z)}{z}\right|= O \left((\log z)^{\frac{1}{4}}\right)$ as
  $z\to \infty$ together with the same assumptions in Theorem \ref{T:Critical},
  using the same argument as before, we can obtain the existence of a mild
  solution to~\eqref{E:SHE}.
\end{remark}

\appendix
\section{Appendix} \label{S:Appendix}

The following theorem is a specific instance of the comparison theorem
from~\cite[Theorem 2.5]{kotelenez:92:comparison}, adapted to our particular
framework.

\begin{theorem}[Weak comparison principle]\label{T:WeakComp}
  Let $u_i$, $i = 1, 2$, be the solutions to~\eqref{E:SHE} with drift terms
  $b_i$ and the same diffusion coefficient $\sigma$. Let the respective
  (deterministic) initial conditions be $u_{0,1}$ and $u_{0,2}$. Assume that
  $\sigma$ and $b_i$ are globally Lipschitz, $b_1(0) = b_2(0) = \sigma(0) = 0$,
  and $u_{0,i} \in L^2(\mathbb{T})$. If $b_1(z) \le b_2(z)$ for all $z \ge 0$
  and $u_{0,1} \le u_{0,2}$, then with probability one, $u_1(t,x) \le u_2(t,x)$
  for all $t \ge 0$ and $x \in \mathbb{T}$.
\end{theorem}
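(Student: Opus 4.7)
The approach is to deduce the result directly from Theorem~2.5 of~\cite{kotelenez:92:comparison}, whose hypotheses are tailored to mild solutions of SPDEs of the form~\eqref{E:SHE}. The plan is to verify each structural condition: $b_1,b_2,\sigma$ are globally Lipschitz on $\mathbb{R}$ with $b_i(0)=\sigma(0)=0$; the deterministic initial data satisfy $u_{0,1}\le u_{0,2}$ in $L^2(\mathbb{T})$; and the pointwise ordering $b_1\le b_2$ holds on $[0,\infty)$. Kotelenez's conclusion then yields $u_1\le u_2$ pathwise on $[0,\infty)\times\mathbb{T}$.

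For transparency, here is how one would argue directly. Set $v(t,x)\coloneqq u_1(t,x)-u_2(t,x)$ and subtract the two mild formulations to obtain
\begin{align*}
 v(t,x) = & \int_0^1 G_t(x-y)\bigl(u_{0,1}(y)-u_{0,2}(y)\bigr)\,\ud y \\
          & + \int_0^t\!\!\int_0^1 G_{t-s}(x-y)\bigl[b_1(u_1(s,y))-b_2(u_2(s,y))\bigr]\,\ud y\,\ud s \\
          & + \int_0^t\!\!\int_0^1 G_{t-s}(x-y)\bigl[\sigma(u_1(s,y))-\sigma(u_2(s,y))\bigr]\,W(\ud s,\ud y).
\end{align*}
The first term is $\le 0$ by hypothesis. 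I would decompose the drift difference as $b_1(u_1)-b_2(u_2)=[b_1(u_1)-b_1(u_2)]+[b_1(u_2)-b_2(u_2)]$, where the second summand is nonpositive whenever $u_2\ge 0$, and the first is Lipschitz-controlled by $|v|$. Similarly, $\sigma(u_1)-\sigma(u_2)=\theta(s,y)\,v(s,y)$ with $|\theta|\le L_\sigma$.

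Nonnegativity of $u_2$ (and $u_1$) is not part of the hypotheses per se, but follows by first applying the very same comparison between $u_i$ and the identically-zero solution, which is itself a mild solution since $b_i(0)=\sigma(0)=0$; this is compatible with the nonnegative initial data encountered throughout the paper. The core technical step is to bound $v^+(t,x)\coloneqq\max(v(t,x),0)$ in $L^2$. Since mild solutions do not admit a direct It\^o formula, I would smooth the positive part by a convex $C^2$ approximation $\phi_\varepsilon$ with $\phi_\varepsilon'\ge 0$, apply It\^o calculus along a Galerkin (Fourier) truncation of~\eqref{E:SHE} on $\mathbb{T}$, and pass to the limit using the $L^p$ moment bound of Proposition~\ref{P:MomBdU}. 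This produces an inequality of the form
\begin{equation*}
 \E\Norm{v^+(t)}_{L^2(\mathbb{T})}^2 \le C\int_0^t \E\Norm{v^+(s)}_{L^2(\mathbb{T})}^2\,\ud s,
\end{equation*}
from which Gronwall's lemma yields $v^+\equiv 0$ a.s.\ for each $t$, and hence $u_1\le u_2$ a.s.\ by joint path continuity.

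The main obstacle sits precisely in the last paragraph: justifying the It\^o step for a finite-dimensional approximation, passing to the Galerkin limit without losing the ordering, and controlling the quadratic-variation contribution coming from $\sigma(u_1)-\sigma(u_2)$ simultaneously with the nonpositive drift term $b_1(u_2)-b_2(u_2)$. These delicate limiting arguments form the technical heart of~\cite[Theorem~2.5]{kotelenez:92:comparison}, which is why invoking that result is the most economical route to the present statement.
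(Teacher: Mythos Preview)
Your proposal is correct and matches the paper's approach: the paper states the result as a specific instance of Theorem~2.5 in~\cite{kotelenez:92:comparison} without reproducing the argument, and then in a subsequent remark upgrades Kotelenez's ``for each $t\ge 0$, a.s.'' conclusion to the uniform statement via the H\"older regularity of Proposition~\ref{P:Holder} --- precisely your path-continuity step. Your additional direct-argument sketch (Galerkin truncation, convex approximation of the positive part, Gronwall) goes beyond what the paper records but is consistent with the method underlying~\cite{kotelenez:92:comparison}.
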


\begin{remark}
  The original conclusion of Theorem 2.5 of~\cite{kotelenez:92:comparison} is
  for all $t\ge 0$, with probability one, $u_1(t,x) \le u_2(t,x)$ for all $x\in
  \mathbb{T}$. Thanks to the H\"older regularity of the solution (Proposition
  \ref{P:Holder}), one can make the probability null set uniform for both $t \ge
  0$ and $x \in \mathbb{T}$ as stated in Theorem~\ref{T:WeakComp} above.
\end{remark}

The following result, concerning the increment of the heat kernel, is used
extensively in the paper.

\begin{lemma}[Lemma 7.1 in \cite{chen.ouyang.ea:23:parabolic}]\label{L:Hodler-G}
  There exists some universal constant $C > 0$ such that for all $\beta \in
  (0,1]$, $x,y \in \mathbb{T}$ and $t'\ge t >0$ such that
  \begin{align}
    & |G_t(x) - G_{t'}(x)| \le C t^{-\beta/2} G_{2t'}( x) (t'-t)^{\beta/2}\,, \\
    & |G_t(x) - G_t   (y)| \le C t^{-\beta/2} (G_{2t}(x) + G_{2t}(y)) |x-y|^{\beta}\,,
  \end{align}
  where $|x-y|$ is understood as the distance on the torus.
\end{lemma}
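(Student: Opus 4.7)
The plan is to reduce the torus estimates to their real-line Gaussian analogs via the periodization $G_t(x)=\sum_{k\in\mathbb{Z}} p_t(x-k)$, where $p_s(z)=(2\pi s)^{-1/2}e^{-z^2/(2s)}$ is the Gaussian kernel on $\mathbb{R}$, exploiting the identity $\sum_{k\in\mathbb{Z}} p_{2t}(x-k)=G_{2t}(x)$. With $|x-y|$ understood as the torus distance, one selects $k_*\in\mathbb{Z}$ realizing $|x-y|=|x-y-k_*|$ and aligns the sums across $x$ and $y$ accordingly.

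The backbone of the real-line argument is the pair of pointwise derivative bounds
\begin{align*}
|\partial_s p_s(x)| &\le C\,s^{-1}\,p_{2s}(x), \\
|\partial_x p_s(x)| &\le C\,s^{-1/2}\,p_{2s}(x),
\end{align*}
obtained from direct differentiation ($\partial_s p_s=p_s(x^2-s)/(2s^2)$ and $\partial_x p_s=-(x/s)p_s$) combined with the elementary absorption $y^m e^{-y^2/2}\le C_m e^{-y^2/4}$ applied at $y=x/\sqrt{s}$, which converts the polynomial factors into the wider Gaussian tail of $p_{2s}$. Integrating along the time or space segment yields the $\beta=1$ case of each inequality for $p_s$; in the spatial case one additionally uses Gaussian tail estimates to replace the integrand along the segment by the symmetric combination $p_{2t}(x)+p_{2t}(y)$. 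For general $\beta\in(0,1)$ we interpolate between the $\beta=1$ bound and the trivial bound $|p_t(x)-p_t(y)|\le p_t(x)+p_t(y)\le C(p_{2t}(x)+p_{2t}(y))$: raising one to the power $\beta$ and the other to $1-\beta$ and multiplying gives the desired $t^{-\beta/2}|x-y|^\beta$ scaling. Summing over $k\in\mathbb{Z}$ transfers this bound to $G_t$.

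The main obstacle is the temporal interpolation, since the analogous $\beta=0$ partner $|p_t(x)-p_{t'}(x)|\le C\,p_{2t'}(x)$ fails uniformly on $\mathbb{R}$: the ratio $p_t(x)/p_{2t'}(x)\sim\sqrt{t'/t}$ is unbounded as $t'/t\to\infty$. The resolution is a two-regime split. For $t'\le 2t$ the ratio is bounded, the interpolation succeeds at the $p_s$ level, and summation over $k$ produces the torus bound. For $t'>2t$ we leave the real-line reduction and exploit the torus structure through the Fourier representation $G_t(x)=1+2\sum_{k\ge 1}\cos(2\pi k x)e^{-2\pi^2 k^2 t}$, which yields a uniform upper bound $|G_s(x)|\le C$ and a lower bound $G_s(x)\ge 1/2$ for all $s$ exceeding a universal threshold $s_0$. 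When $2t'\ge s_0$ the inequality then reduces to $|G_t(x)-G_{t'}(x)|\le 2C\le C'\,t^{-\beta/2}(t'-t)^{\beta/2}G_{2t'}(x)$, using $(t'-t)^{\beta/2}\ge t^{\beta/2}$ since $t'-t>t$. In the residual subcase $t'>2t$ with $2t'<s_0$, all relevant times lie in the bounded interval $(0,s_0/2)$, and a doubling argument iterating the $t'\le 2t$ step $O(\log_2(t'/t))$ times, with the gain at each step absorbing the logarithmic number of iterations, yields the bound. Consolidating the universal constants across regimes completes the proof.
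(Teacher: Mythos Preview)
The paper itself does not prove this lemma; it is simply quoted as Lemma~7.1 of \cite{chen.ouyang.ea:23:parabolic}. So there is no in-paper argument to compare against, and I comment only on correctness.

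Your spatial inequality and the temporal case $t'\le 2t$ are handled correctly: the derivative bounds $|\partial_x p_s|\le Cs^{-1/2}p_{2s}$ and $|\partial_s p_s|\le Cs^{-1}p_{2s}$, the $\beta=1$ estimates, the trivial $\beta=0$ partners, the interpolation, and the passage to $G$ by summing over $\mathbb Z$ are all sound. The gap is in the regime $t'>2t$. In the subcase $2t'\ge s_0$ you assert $|G_t(x)-G_{t'}(x)|\le 2C$ via the Fourier bound, but that bound controls $G_s$ only for $s$ bounded below, and here $t$ carries no lower bound: with $t'=s_0$, $x=0$ and $t\downarrow 0$ one has $G_t(0)\sim(2\pi t)^{-1/2}\to\infty$, so the reduction fails. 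The doubling argument in the residual subcase has the same defect, since already the first telescoping term $|G_t(0)-G_{2t}(0)|$ is of order $t^{-1/2}$, which for $\beta<1$ exceeds the target $t^{-\beta/2}(t'-t)^{\beta/2}G_{2t'}(0)$.

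This gap is not repairable, because the temporal inequality as written is actually false for $\beta<1$: take $x=0$, fix $t'=1$, and let $t\downarrow0$; then $|G_t(0)-G_1(0)|\sim t^{-1/2}$ while $Ct^{-\beta/2}G_2(0)(1-t)^{\beta/2}\sim t^{-\beta/2}$, and the ratio diverges. The estimate does hold for $\beta=1$, and for all $\beta\in(0,1]$ under a restriction of the form $t'\le Kt$ (precisely your first regime). The defect therefore lies in the statement rather than in your method; in the cited source one should expect either such a restriction or an envelope like $G_{2t}(x)+G_{2t'}(x)$ in place of $G_{2t'}(x)$ alone.
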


\bibliographystyle{amsrefs}
\bibliography{All}

\noindent\rule{\textwidth}{0.4pt}
\vspace{0.5em}

\noindent\textbf{Author Information}

\vspace{0.5em}
\noindent Le Chen \\
Department of Mathematics and Statistics, Auburn University\\
Auburn, AL 36849, USA\\
Email: \texttt{lzc0090@auburn.edu}

\vspace{1em}
\noindent Jingyu Huang \\
School of Mathematics, University of Birmingham \\
Birmingham, B15 2TT, UK \\
Email: \texttt{j.huang.4@bham.ac.uk}

\vspace{1em}
\noindent Wenxuan Tao \\
School of Mathematics, University of Birmingham \\
Birmingham, B15 2TT, UK  \\
Email: \texttt{wxt399@student.bham.ac.uk}


\end{document}